\numberwithin{equation}{section}
\theoremstyle{plain}
\newtheorem{theorem}{theorem}[section]
\newtheorem{definition}[theorem]{Definition}
\newtheorem{lemma}[theorem]{Lemma}
\newtheorem{sublemma}[theorem]{Sublemma}
\newtheorem{remark}[theorem]{Remark}
\newtheorem*{ThmA}{Theorem A}
\newtheorem*{ThmB}{Theorem B}
\newtheorem*{ThmC}{Theorem C}
\newtheorem*{ThmD}{Theorem D}
\newtheorem*{ThmE}{Theorem E}
\newtheorem*{ThmB$'$}{Theorem B$'$}
\newtheorem*{Example}{Example}
\def\a{\alpha}
\def\b{\beta}
\def\ve{\varepsilon} 
   \def\S{\Sigma}
\def\t{\theta}
\def\Diff{\mathop{\hbox{{\rm Diff}}}}
\def\id{{\rm id}}
\def\Vol{\mathop{\hbox{\rm Vol}}}
\def\disp{\displaystyle}
\begin{document}
\vskip 5.1cm
\title{Quasi-Stability of Partially Hyperbolic Diffeomorphisms\footnotetext{\\
\emph{ 2000 Mathematics Subject Classification}:37C20,37C50,37D30\\
\emph{Keywords and phrases}: partially hyperbolicity;
quasi-stability; entropy.\\
The second author is supported by NSFC(No:11071054), the Key Project of Chinese Ministry of Education(No:211020), NCET and the SRF for ROCS, SEM.}}
\author {Huyi Hu$^1$ and Yujun Zhu$^2$   \\
\small {1. Department of Mathematics,}\\
\small {Michigan State University, East Lansing, MI 48824, USA}\\
\small {2. College of Mathematics and Information Science,}\\
\small {Hebei Normal University, Shijiazhuang, 050024, P.R.China}}
\date{}
\maketitle

\begin{abstract}
A partially hyperbolic diffeomorphism $f$ is structurally
quasi-stable if for any diffeomorphism $g$ $C^1$-close to $f$,
there is a homeomorphism $\pi$ of $M$ such that $\pi\circ g$
and $f\circ\pi$ differ only by a motion $\tau$ along center directions.
$f$ is topologically quasi-stable if for any homeomorphism $g$ $C^0$-close to $f$,
the above holds for a continuous map $\pi$ instead of a homeomorphism.
We show that any  partially hyperbolic diffeomorphism $f$ is topologically
quasi-stable, and if $f$ has $C^1$ center foliation $W^c_f$, then $f$ is structurally quasi-stable.
As applications we obtain continuity of topological
entropy for certain partially hyperbolic diffeomorphisms with one
or two dimensional center foliation.
\end{abstract}

\section{Introduction}

The motivation of this paper is to study topological properties of
partially hyperbolic systems
which are similar to those of uniformly hyperbolic systems.

Partial hyperbolicity theory was first studied in the work of Brin and
Pesin (\cite{Brin}) which emerged in attempts to extend the notion
of complete hyperbolicity. A closely related notion of normal
hyperbolicity was introduced earlier by Hirsh, Pugh and Shub
\cite{Hirsch}. For general theory of partial hyperbolicity and
normal hyperbolicity, we refer to \cite{Pesin}, \cite{Hirsch1},
\cite{Barreira} and \cite{Bonatti}.

It is well known that Anosov diffeomorphisms are \emph{structurally
stable}(\cite{Anosov1}), that is, if $f$ is an Anosov diffeomorphism
on a compact manifold $M$ then any diffeomorphism $g$ $C^1$-close to
$f$ is topologically conjugate to $f$, i.e., there exists a
homeomorphism $\pi$ on $M$ such that
\begin{equation}\label{conjugacy}
\pi\circ g=f\circ\pi.
\end{equation}
Moreover, $f$ is also \emph{topologically stable}(\cite{Walters}),
that is, for any homeomorphism $g$ $C^0$-close to $f$, there exists
a continuous map $\pi$ from $M$ onto $M$ such that equation
(\ref{conjugacy}) holds.  For partially hyperbolic diffeomorphisms,
we can not expect such stabilities because of the existence
of the center direction.
However, since the systems have both stable and unstable directions,
we should be able to obtain some similar properties if we look at
the behavior of the hyperbolic part, and ``ignore'' the motions
along the center direction.

In this paper, we shall investigate the ``stability" property of
partially hyperbolic systems under $C^0$ and $C^1$ perturbations. Let
$f$ be a partially hyperbolic diffeomorphism. We show in Theorem A
that for any homeomorphism $g$ $C^0$-close to $f$, there exist a
continuous map $\pi$ from $M$ to itself and a family of locally
defined continuous maps $\{\tau_x:x\in M\}$, which send points
along the center direction, such that
\begin{equation}\label{qusi conjugacy}
\pi\circ g(x)=\tau_{f(x)}\circ f\circ\pi(x) \quad  \text{for all} \
x\in M.
\end{equation}
In this case we say that $f$ is \emph{topologically quasi-stable}.
Moreover, if center foliation $\mathcal{W}^c_f$ of $f$ exists and
is of $C^1$, then we can choose a new family $\{\tau_x:x\in M\}$,
which map points along the center leaves such that an equation similar
to (\ref{qusi conjugacy}) holds.
The results are given in Theorem~B.
Theorem B$'$ deal with a particular case,
i.e., one dimensional center foliation, in which the map
$\tau$ can be determined by a flow along the foliation. In Theorem C
we obtain \emph{structural quasi-stability} property of $f$
under $C^1$ perturbation. That is, if the center foliation $\mathcal{W}^c_f$
is $C^1$, then
for any diffeomorphism $g$ $C^1$-close to $f$, $\pi$ is a
leaf conjugacy between $f$ and $g$.

As applications of the results, we obtain that if $f$ is the time $1$
map of an Anosov flow generated by a $C^1$ vector field,
then any diffeomorphism $g$ $C^1$-close to $f$ is a time $1+\tau\circ f$
map of a flow, and the topological entropy of $f$ and $g$
are close (Theorem D).  Also, if $f$ has almost parallel center foliation
(see next section for the precise definition),
then so does any diffeomorphism $g$ $C^1$-close to $f$,
and the topological entropy function
is locally constant at $f$ in $\Diff^1(M)$ in the case of one dimensional
center foliation, and continuous at $f$ in $\Diff^\infty(M)$ in the case
of two dimensional center foliation (Theorem~E).

Our results concerning topological quasi-stability and structural
quasi-stability can be regarded as generalizations of topological
stability and structural stability for hyperbolic systems
(\cite{Anosov1} and \cite{Walters}) to partially hyperbolic systems.
They can also be regarded as generalizations of leaf conjugacy for
the case that $f$ has $C^1$ center foliation (\cite{Hirsch1} and
\cite{Pesin}). However, for topologically quasi-stability, we do not
require any additional assumption on $f$ except for partial
hyperbolicity. The methods we use for topological and structural
quasi-stability are basically the same.  We construct an operator in
the Banach space consisting all continuous sections of the tangent
bundle $TM$ which is contracting in a neighborhood of the zero
section such that under the inverse of the exponential map
$\exp^{-1}$,  $\pi$ and $\tau$ are given by the fixed point of the operator.
The methods are adopted from \cite{Walters, Kato} and are different
from that used in \cite{Hirsch1}.  We notice that there is another strategy to investigate the topological quasi-stability of partially hyperbolic diffeomorphism using the similar idea in \cite{Walters1}, in which Walters obtained topological stability for expansive homeomorphisms with  shadowing property. Actually, adapting the unified approach in this paper we will show in a forthcoming paper \cite{HZZ} that any partially hyperbolic diffeomorphism has the so-called ``quasi-shadowing" property. Hence, we can obtain the similar results in Theorem A, Theorem B and Theorem B$'$.

With some additional condition Y. Hua showed that the topological
entropy is continuous near the time one map $f$ of an Anosov flow
(\cite{Hua}). Now the fact becomes a direct consequence of our
result and a result in \cite{Thomas}.
Our results for topological entropy of diffeomorphisms
with almost parallel center foliation are similar to that in \cite{Xia},
which is under the assumption that the strongly unstable and
stable foliations stably carry some unique nontrivial homologies.

This paper is organized as the following.
The statements of results are given in Section \ref{DSN}. We also
define some words and notations in the section. In Section
\ref{STopqs} we deal with topological quasi-stability, including the
proof of Theorem~A, Theorem~B and Theorem B$'$. The case of
structural quasi-stability is discussed in Section~\ref{SStructqs},
where we use the facts obtained in Section~\ref{STopqs} to
prove that the map $\pi$ is a homeomorphism to obtain Theorem C.
Section~\ref{SApp} is concerning the applications to topological
entropy, where Theorem~D and Theorem~E are proved.

\section{Definition, statement of results and notations}\label{DSN}

Let $M$ be an $m$-dimensional $C^\infty$ compact Riemannian manifold.
We denote by $\|\cdot\|$ and $d(\cdot,\cdot)$ the norm on $TM$ and
the metric on $M$ induced by the Riemannian metric respectively.
Denote by $\Diff^r(M)$ the set of $C^r$ diffeomorphisms of $M$,
$1\le r\le \infty$.

A diffeomorphism $f\in\Diff^r(M)$ is said to be
\emph{(uniformly) partially hyperbolic} if there exist
numbers $\lambda,\lambda',\mu$ and $\mu'$ with
$0<\lambda<1<\mu$ and $\lambda<\lambda'\leq \mu'<\mu$,
and an invariant decomposition $T_xM=E_x^s\oplus E_x^c\oplus E_x^u$
\ $\forall x\in M$, such that for any $n\ge 0$,
\begin{eqnarray*}
\|d_xf^nv\|  &\!\!\!\!\!  \le C\lambda^n\|v\|\ \   &\text{as} \ v\in E^s(x), \\
C^{-1}(\lambda')^n\|v\| \le \|d_xf^nv\|  &\!\!\!  \le C(\mu')^n\|v\|
               \ &\text{as} \   v\in E^c(x),\\
C^{-1}\mu^n\,\|v\| \leq  \|d_xf^nv\|&\ &\text{as} \  v\in E^u(x)
\end{eqnarray*}
hold for some number $C>0$.  $E_x^s, E_x^c$ and $E_x^u$ are called
\emph{stable, center} and \emph{unstable} subspace, respectively.
Via a change of Riemannian metric we always assume that $C=1$.
Moreover, for simplicity of notation, we assume that
$\disp \lambda=\frac{1}{\mu}$.

Since $M$ is compact, we can take constant $\rho_0>0$ such that for
any $x\in M$, the standard exponential mapping
$\exp_{x}:\{v\in T_{x}M:\|v\|<\rho_0\}\to M$ is a $C^{\infty}$
diffeomorphism to the image. Clearly, we have $d(x, \exp_xv)=\|v\|$
for $v\in T_xM$ with $\|v\|<\rho_0$.
For any diffeomorphism $f: M\to M$, we take $\rho=\rho_f\in (0,\rho_0/2)$
such that for any $x,y\in M$ with $d(f^{-1}(x),y)\le \rho$, $v\in T_yM$
with $\|v\|\le \rho$,
$$d(x, f\circ \exp_yv)\le \rho_0/2.
$$
Reduce $\rho$ if necessary such that both sides in equation \eqref{feqn3ThmA}
and \eqref{feqn3ThmB}, in the proof of Theorem A and Theorem B
respectively, are contained in the set $\{v\in T_{x}M:\|v\|<\rho_0\}$.

For any given continuous center section $u=\{u(x)\in E_x^c:x\in M\}$ with $\sup_{x\in M}\|u(x)\|<\rho$,
we define a smooth map $\tau^{(1)}_x=\tau^{(1)}_x(\cdot, u)$
on $B(x,\rho)$ for any $x\in M$ by
$$
\tau^{(1)}_x(y)=\exp_x(u(x)+\exp_x^{-1}y).
$$

\begin{ThmA}\label{ThmA}
Let $f\in \Diff^r(M)$ be a partially hyperbolic diffeomorphism.
Then there exists $\varepsilon_0\in
(0,\rho)$ satisfying the following conditions: For any
$\varepsilon\in (0,\varepsilon_0)$ there exists $\delta>0$ such that
for any homeomorphism $g$ of $M$ with $d(f,g)<\delta$ there exist a
continuous center section $u$ and a surjective
continuous map $\pi:M\to M$ such that
\begin{equation}\label{feqnThmA}
\pi\circ g(x)=\tau^{(1)}_{f(x)}\circ f\circ\pi(x),\;\;x\in M.
\end{equation}

Moreover, $u$ and $\pi$ can be chosen uniquely so as to satisfy the
following conditions:
\begin{equation}\label{cond1}
\begin{split}
&d(\pi,{\id}_M)<\varepsilon,  \\
&\exp_x^{-1}(\pi(x))\in E_x^s\oplus E_x^u \ \  \text{for} \ x\in M.
\end{split}
\end{equation}
\end{ThmA}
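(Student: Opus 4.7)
\emph{The plan} is to convert the quasi-conjugacy equation \eqref{feqnThmA} into a fixed-point problem on the Banach space of continuous sections of $TM$ and solve it by a contraction-mapping argument, in the spirit of Walters' proof of topological stability for Anosov diffeomorphisms. Let $\Gamma^{su}_\varepsilon$ be the closed subset of $C(M,TM)$ consisting of continuous sections $\sigma$ with $\sigma(x)\in E^s_x\oplus E^u_x$ and $\|\sigma\|_\infty\le\varepsilon$, equipped with the sup norm. For $\sigma\in\Gamma^{su}_\varepsilon$ I set $\pi_\sigma(x):=\exp_x\sigma(x)$; this construction automatically enforces both conditions in \eqref{cond1}, so once a suitable $\sigma$ is found the uniqueness assertion will reduce to uniqueness of the fixed point.

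Next I would apply $\exp_{f(x)}^{-1}$ to both sides of \eqref{feqnThmA}, a legitimate step by the choice of $\rho$, rewriting it as
\begin{equation*}
\exp_{f(x)}^{-1}\pi_\sigma(g(x))\,=\,u(f(x))+\exp_{f(x)}^{-1}\bigl(f\pi_\sigma(x)\bigr),
\end{equation*}
and then project both sides onto $E^s_{f(x)}$, $E^c_{f(x)}$, $E^u_{f(x)}$. The center projection \emph{defines} $u(f(x))$ uniquely as a function of $\sigma$; the stable and unstable projections yield equations involving $\sigma$ alone. Using that $g$ is a homeomorphism, I would reindex the stable equation by $y=g(x)$ and solve for $\sigma^s(y)$ in terms of $\sigma(g^{-1}y)$ via a map that, modulo near-identity corrections, is $df|_{E^s}$ and contracts with factor $\lambda$; I would keep the unstable equation at $x$ and invert to express $\sigma^u(x)$ in terms of $\sigma(g(x))$ via a map that, modulo the same corrections, is $(df|_{E^u})^{-1}$ and contracts with factor $\mu^{-1}=\lambda$. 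Assembling these two prescriptions gives an operator $\mathcal{T}:\Gamma^{su}_\varepsilon\to\Gamma^{su}$ whose fixed points are exactly those $\sigma$ for which $(\pi_\sigma,u)$ solves \eqref{feqnThmA}.

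Two sources of nonlinear error enter the contraction estimate: the difference between $\hat f_x(v):=\exp_{f(x)}^{-1}f\exp_xv$ and its linearization $d_xf$, which is $O(\varepsilon)$ on $\Gamma^{su}_\varepsilon$; and the difference between the chart-transport $T_x^g(v):=\exp_{f(x)}^{-1}\exp_{g(x)}v$ and the identity, which is $O(d(f,g))=O(\delta)$. Choosing $\varepsilon_0$ first (small enough that the nonlinear remainder is dominated by $1-\lambda$) and then $\delta$ (small enough that $T_x^g$ and the induced center drift $u$ remain within $\rho$), one verifies that $\mathcal{T}$ stabilizes $\Gamma^{su}_\varepsilon$ and is a contraction there. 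Banach's fixed-point theorem then provides a unique $\sigma\in\Gamma^{su}_\varepsilon$, and $u$ is reconstructed from it by the center-projection formula. Continuity of $\pi=\pi_\sigma$ and $u$ is immediate; surjectivity of $\pi$ follows from the classical fact that a self-map of a closed manifold $C^0$-close to $\id_M$ is homotopic to it, hence of nonzero degree, hence onto.

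The main obstacle will be the bookkeeping in the contraction estimate, because $T_x^g$ does not respect the splitting $E^s\oplus E^c\oplus E^u$: the stable equation for $\sigma^s$ therefore acquires contributions from the unstable component of $\sigma(g(x))$ and from higher-order chart error in the (vanishing) center component, and symmetrically for the unstable equation. The delicate point is to show that all of these off-diagonal blocks, together with the nonlinear chart errors, can be absorbed without destroying the dominant contraction factors $\lambda$ and $\mu^{-1}$ coming from partial hyperbolicity; once this is verified the rest of the theorem follows cleanly from the fixed-point theorem and the degree argument above.
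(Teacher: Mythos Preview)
Your proposal is correct and follows essentially the same route as the paper: convert \eqref{feqnThmA} into a fixed-point problem for the section $v=\exp_x^{-1}\pi(x)\in E^s_x\oplus E^u_x$, read off $u$ from the center projection, and obtain the contraction from partial hyperbolicity together with smallness of the nonlinear chart remainder $\eta$ and of the off-diagonal transport error coming from $d(f,g)$. The paper packages the linear part as a single operator $P_h=-J_h^{-1}\oplus(\id_{\mathfrak{X}^{us}}-J_h^{-1}F)$ on $\mathfrak{X}^c\oplus\mathfrak{X}^{us}$ and inverts it by Neumann series (Sublemma~\ref{SL1ThmA}) rather than handling the stable and unstable equations by your separate reindexings, but since its $\Phi_h$ depends only on $v$ the two organizations yield the same estimates and the same fixed point.
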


We mention again here that the theorem does not require any additional
condition, provided $f$ is a partially hyperbolic diffeomorphism,
and $g$ is a homeomorphisms $C^0$ close to $f$.

If $f$ has $C^1$ center foliation
$\mathcal{W}^c_f$, then we can require $\tau$ to move along the center
foliation. In this case, we denote for any $\varepsilon>0$,
$\Sigma_\varepsilon(x)=\exp_x(H_x(\varepsilon))$, where $H_x(\varepsilon)$ is
the $\varepsilon-$ball in $E_x^s\oplus E_x^u$. Obviously, $\Sigma_\varepsilon(x)$ is a smooth
disk transversal to $E_x^c$ at $x$. Since the center foliation
$\mathcal{W}^c_f$ is $C^1$, we can conclude that if $y$ is close enough to $x$, then there exist a locally defined smooth map $\tau_x^{(2)}$ on
some neighborhood $U(x)$ of $x$ and a  constant $K_1>1$ independent of $x$ such that for any $y\in U(x)$, we have
\begin{equation}\label{cond1 of tau2}
\tau_x^{(2)}(y) \in \Sigma_\varepsilon(x)\cap \mathcal{W}^c_f(y)
\end{equation}
and
\begin{equation}\label{cond2 of tau2}
d(\tau_x^{(2)}(y),x)<K_1d(y,x).
\end{equation}

\begin{ThmB}\label{ThmB}
Assume that $f\in \Diff^r(M)$ is a partially hyperbolic diffeomorphism
with $C^1$ center
foliation $\mathcal{W}^c_f$. Then there exists $\varepsilon_0\in
(0,\rho)$ satisfying the following conditions: For any
$\varepsilon\in (0,\varepsilon_0)$ there exists $\delta>0$ such that
for any homeomorphism $g$ of $M$ with $d(f,g)<\delta$ there exists a surjective
continuous map $\pi:M\to M$ such that
\begin{equation}\label{feqnThmB}
\pi\circ g(x)=\tau^{(2)}_{g(x)}\circ f\circ\pi(x),\;\;x\in M.
\end{equation}

Moreover, $\pi$ can be chosen uniquely so as to satisfy the conditions in (\ref{cond1}).
\end{ThmB}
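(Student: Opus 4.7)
The plan is to adapt the contraction-mapping argument used for Theorem A, but with the local map $\tau^{(2)}$ (which slides points along the $C^1$ center leaves onto the transversal $\Sigma_\varepsilon$) replacing the infinitesimal translation $\tau^{(1)}$. I parametrize candidate maps $\pi:M\to M$ by continuous sections $\sigma$ of $E^s\oplus E^u$ with $\sup_x\|\sigma(x)\|<\varepsilon$, writing $\pi(x)=\exp_x\sigma(x)$; then condition (\ref{cond1}) holds automatically. Using that $g$ is a homeomorphism, equation (\ref{feqnThmB}) translates into the fixed-point equation
\[
\sigma(y)=\exp_y^{-1}\Bigl(\tau^{(2)}_{y}\bigl(f\circ\exp_{g^{-1}(y)}\sigma(g^{-1}(y))\bigr)\Bigr),\qquad y\in M,
\]
whose right-hand side defines an operator $\Phi$ on sections; any fixed point of $\Phi$ will provide the required $\pi$.

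I would then verify that $\Phi$ is well-defined and contracting on the closed sup-norm ball of radius $\varepsilon$ in the Banach space of continuous sections of $E^s\oplus E^u$, provided $\varepsilon_0$ is small and $d(f,g)<\delta$ is sufficiently small. Well-definedness is immediate from (\ref{cond1 of tau2})--(\ref{cond2 of tau2}) together with the $C^0$-closeness of $f$ and $g$: the point $f(\exp_{g^{-1}(y)}\sigma(g^{-1}(y)))$ lies in the neighborhood $U(y)$ on which $\tau^{(2)}_y$ is defined, its image under $\tau^{(2)}_y$ lies in $\Sigma_\varepsilon(y)$ (so $\exp_y^{-1}$ lands in $E^s_y\oplus E^u_y$), and the Lipschitz constant $K_1$ keeps the norm controlled. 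For the contraction, I would split $\sigma=\sigma^s+\sigma^u$ and run the standard graph-transform estimate componentwise: the linearization of $\Phi$ at the zero section is the composition of $df$ with the linear projection onto $E^s\oplus E^u$ along $E^c$, and the partially hyperbolic bounds $\|df|_{E^s}\|\le\lambda$ and $\|(df|_{E^u})^{-1}\|\le 1/\mu$ (the latter applied to the unstable component by inverting the fixed-point equation for $\sigma^u$) yield a linear contraction rate of $\max(\lambda,1/\mu)<1$. The $C^1$ regularity of $\mathcal{W}^c_f$ ensures that $\tau^{(2)}_y$ differs from this linear projection by terms that are $o(1)$ uniformly in $y$ as $\varepsilon_0\to 0$, so the contraction closes on a sufficiently small ball.

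The main obstacle, relative to Theorem A, is precisely this nonlinear holonomy: $\tau^{(1)}_x$ is a straight translation in $T_xM$ whose derivative at the basepoint is the identity, whereas $\tau^{(2)}_x$ depends on the foliation $\mathcal{W}^c_f$, is only locally defined, and is only $C^1$. The $C^1$ hypothesis on $\mathcal{W}^c_f$, together with the uniform bound $K_1$ in (\ref{cond2 of tau2}), is exactly what lets one control the nonlinear error and recover the graph-transform estimates of Theorem A. Once a fixed point $\sigma^\ast$ is produced, uniqueness subject to (\ref{cond1}) is automatic from uniqueness of the fixed point in the ball, and surjectivity of the associated $\pi$ follows as in Theorem A: $\pi$ is $C^0$-close to $\id_M$, hence homotopic to $\id_M$ and of degree $\pm 1$, so onto.
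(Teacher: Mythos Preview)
Your overall strategy matches the paper's---parametrize $\pi(x)=\exp_x\sigma(x)$ with $\sigma\in\mathfrak{X}^{us}$ and recast (\ref{feqnThmB}) as a fixed-point equation in $\sigma$---but there is a gap in the contraction step. The operator $\Phi$ you write down is what the paper calls $\beta$, and it is \emph{not} a contraction on the $\varepsilon$-ball, nor even a self-map of it: as you yourself note, its linearization at the zero section is essentially $df$ followed by projection onto $E^s\oplus E^u$, and on the unstable component this has norm $\ge\mu>1$. A direct estimate via (\ref{cond2 of tau2}) gives only $\|\Phi(\sigma)(y)\|\le K_1\bigl(\mathrm{Lip}(f)\,\varepsilon+\delta\bigr)$, and since $K_1>1$ and $\mathrm{Lip}(f)\ge\mu$ this does not close on the ball. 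Your parenthetical about ``inverting the fixed-point equation for $\sigma^u$'' is the correct instinct, but that produces a \emph{different} operator, not $\Phi$, and you have not said how to carry out that inversion for the nonlinear map.

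The paper resolves this cleanly by linearizing first: write $\beta=A+\eta$, where $A$ retains only the block-diagonal part $A^s+A^u$ of $d_0\beta$ (so $\|A^s\|$ and $\|(A^u)^{-1}\|$ are close to $\lambda$ and $\mu^{-1}$), and the remainder $\eta=\beta-A$ has small Lipschitz constant together with $\eta(0)$ small when $d(f,g)$ is small. Then $P:=\id_{\mathfrak{X}^{us}}-A$ is invertible with bounded inverse via Neumann series on each block (exactly as in Sublemma~\ref{SL1ThmA}), and the operator one actually iterates is $P^{-1}\eta$, which \emph{is} a contraction carrying the ball into itself. This is precisely your ``inversion on the unstable side'' executed at the linear level, where it is painless. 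Once you make this correction the rest of your sketch---surjectivity of $\pi$ from degree, uniqueness from the fixed point---goes through and coincides with the paper's argument.
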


As a special case, if the center foliation $\mathcal{W}^c_f$ is $C^1$ and of
dimension one, then we can define $\tau$ more directly.
Let $u\in \mathfrak{X}^c$ with $\|u(x)\|=1$ for any $x\in M$,
and $\varphi^t$ be the flow generated by $u$. For any continuous
function $\tilde\tau: M\to \mathbb{R}$, define a smooth map
$\tau^{(3)}_x=\tau^{(3)}_x(\cdot, \tilde\tau)$ of $B(x,\rho)$ for any $x\in M$ by
$$
\tau^{(3)}_x(y)=\varphi^{\tilde\tau(x)}(y).
$$

\begin{ThmB$'$}\label{ThmB'}
Assume that $f\in \Diff^r(M)$ is a partially hyperbolic diffeomorphism
with one dimensional $C^1$ center foliation $\mathcal{W}^c_f$.
Then there exists $\varepsilon_0\in (0,\rho)$ satisfying the following
conditions: For any
$\varepsilon\in (0,\varepsilon_0)$ there exists $\delta>0$ such that
for any homeomorphism $g$ of $M$ with $d(f,g)<\delta$ there exists a
continuous function $\widetilde{\tau}:M\to \mathbb{R}$ and a
surjective continuous map $\pi:M\to M$ such that
\begin{equation}\label{feqnThmB'}
\pi\circ g(x)=\tau^{(3)}_{f(x)} \circ f\circ\pi(x), \quad x\in M.
\end{equation}

Moreover, $\tilde\tau$ and $\pi$ can be chosen uniquely so as to satisfy the conditions in (\ref{cond1}).
\end{ThmB$'$}

Now we consider the structural quasi-stability as $g$ is $C^1$-close to $f$.

A diffeomorphism $f$ is called \emph{dynamically coherent} if
$E^{cu}:=E^c\oplus E^u$, $E^{c}$, and $E^{cs}:=E^c\oplus E^s$ are
integrable, and everywhere tangent to $\mathcal{W}^{cu}_f$,
$\mathcal{W}^{c}_f$ and $\mathcal{W}^{cs}_f$, the {\it
center-unstable}, \emph{center} and \emph{center-stable foliations},
respectively; and $\mathcal{W}^c_f$ and $\mathcal{W}^u_f$ are
subfoliations of $\mathcal{W}^{cu}_f$, while $\mathcal{W}^c_f$ and
$\mathcal{W}^s_f$ are subfoliations of $\mathcal{W}^{cs}_f$. By
Theorem 2.3 of \cite{Pugh}, if $f$ is as in Theorem B then it is
dynamically coherent and this property is permanent under $C^1$
perturbation.

\begin{ThmC}\label{ThmC}
Under the assumption of Theorem B (resp. Theorem B$'$), if $g$ is a
diffeomorphism $C^1$-close to $f$, then $\pi$ can be chosen
to be a homeomorphism and hence there exists a homeomorphism $\tau^{(2)}$
in Theorem B (resp. $\tau^{(3)}$ in Theorem B$'$) such that
$\pi\circ g=\tau^{(2)}\circ f\circ\pi$
(resp. $\pi\circ g=\tau^{(3)}\circ f\circ\pi$).
Also, $\pi$ and $\tau^{(2)}$ (resp. $\tau^{(3)}$) can be chosen uniquely so as to satisfy the conditions in (\ref{cond1}) if we replace $E^s_x$ and $E^u_x$ in \eqref{cond1}
by their smooth approximation $\tilde E^s_x$ and $\tilde E^u_x$ respectively.

Moreover, $\pi$ sends
$\mathcal{W}^{cu}_g$, $\mathcal{W}^{c}_g$ and $\mathcal{W}^{cs}_g$ to
$\mathcal{W}^{cu}_f$, $\mathcal{W}^{c}_f$ and $\mathcal{W}^{cs}_f$ respectively.
In particular,  $\pi$ is a leaf conjugacy from $(g,\mathcal{W}_g^c)$ to
$(f, \mathcal{W}_f^c)$.
\end{ThmC}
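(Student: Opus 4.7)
The plan is to upgrade the map $\pi$ produced by Theorem B (resp.\ Theorem B$'$) from a continuous surjection to a homeomorphism by constructing a continuous two-sided inverse, and then derive leaf conjugacy from the intertwining equation. Since $g$ is $C^1$-close to $f$, $g$ is itself partially hyperbolic, and by Theorem 2.3 of \cite{Pugh} it is dynamically coherent with continuous center foliation $\mathcal{W}^c_g$ close to $\mathcal{W}^c_f$. The distributions $E^s_g,E^u_g$ are in general only H\"older, so we replace them by $C^\infty$ approximations $\tilde E^s_x,\tilde E^u_x$ arbitrarily $C^0$-close to them; this is the reason for the modification of condition \eqref{cond1} stated in the theorem, and it is what ensures the local transversal disks used in the construction remain smooth.

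First I would apply Theorem B to the pair $(f,g)$ to obtain a continuous surjection $\pi_1:M\to M$ and a center-motion $\tau_1$ along $\mathcal{W}^c_f$-leaves with $\pi_1\circ g=\tau_1\circ f\circ\pi_1$ and $d(\pi_1,\id)<\varepsilon$. Next I would rerun the same construction with the roles of $f$ and $g$ interchanged, using the smooth transversals built from $\tilde E^{s,u}$ and the center foliation $\mathcal{W}^c_g$ (which, although only continuous, inherits from $\mathcal{W}^c_f$ the uniform local product structure needed by the contraction argument); this yields $\pi_2:M\to M$ and $\tau_2$ along $\mathcal{W}^c_g$-leaves with $\pi_2\circ f=\tau_2\circ g\circ\pi_2$ and $d(\pi_2,\id)<\varepsilon$. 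Composing the two intertwining equations shows that $\pi_2\circ\pi_1$ is a continuous self-map of $M$ which is $2\varepsilon$-close to $\id_M$ and satisfies a quasi-conjugacy equation for $g$ with itself, of the form $(\pi_2\circ\pi_1)\circ g=\sigma\circ g\circ(\pi_2\circ\pi_1)$ for a continuous center-motion $\sigma$. Since $\id_M$ is an obvious solution, the uniqueness clause of Theorem B applied to $(g,g)$ (with the smooth transversals $\tilde E^{s,u}$) forces $\pi_2\circ\pi_1=\id_M$, and symmetrically $\pi_1\circ\pi_2=\id_M$. Hence $\pi:=\pi_1$ is a homeomorphism, and $\tau^{(2)}=\pi\circ g\circ\pi^{-1}\circ f^{-1}$ is a homeomorphism too; the argument for $\tau^{(3)}$ in the setting of Theorem B$'$ is identical.

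For the leaf statements, iterating $\pi\circ g=\tau^{(2)}\circ f\circ\pi$ gives $\pi\circ g^n=\tau^{(n)}\circ f^n\circ\pi$, where the composed center-motion $\tau^{(n)}$ distorts distances only by a uniformly bounded factor; combined with exponential contraction of $f$-stable distances, this forces $\pi(\mathcal{W}^s_g(x))\subset\mathcal{W}^{cs}_f(\pi(x))$, and the backward iterate gives $\pi(\mathcal{W}^u_g(x))\subset\mathcal{W}^{cu}_f(\pi(x))$. Saturating with center leaves and using bijectivity of $\pi$ together with the complementary inclusion for $\pi^{-1}$ yields $\pi(\mathcal{W}^{cs}_g)=\mathcal{W}^{cs}_f$ and $\pi(\mathcal{W}^{cu}_g)=\mathcal{W}^{cu}_f$, whose transverse intersection is $\pi(\mathcal{W}^c_g)=\mathcal{W}^c_f$; this is exactly the leaf conjugacy from $(g,\mathcal{W}^c_g)$ to $(f,\mathcal{W}^c_f)$. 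The main obstacle is the ``reverse'' application of Theorem B to the pair $(g,f)$: Theorem B as stated assumes the reference diffeomorphism has $C^1$ center foliation, whereas $\mathcal{W}^c_g$ is only continuous. What I expect to have to verify carefully is that the proof of Theorem B uses the $C^1$-ness of $\mathcal{W}^c_f$ only to produce the smooth transversals $\Sigma_\varepsilon(x)$ and the maps $\tau^{(2)}_x$ satisfying \eqref{cond1 of tau2}--\eqref{cond2 of tau2}; once these are supplied via the smooth approximations $\tilde E^{s,u}$ and the $C^1$-closeness of $g$ to $f$, the contraction-mapping fixed point argument goes through unchanged, and the uniqueness step becomes available to close the composition argument.
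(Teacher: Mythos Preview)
Your composition argument has a genuine gap at two linked places. First, the two intertwining relations do not compose cleanly: from $\pi_1\circ g=\tau_1\circ f\circ\pi_1$ and $\pi_2\circ f=\tau_2\circ g\circ\pi_2$ you get $\pi_2\circ\pi_1\circ g=\pi_2\circ\tau_1\circ f\circ\pi_1$, and to pull the $\mathcal W^c_f$-motion $\tau_1$ past $\pi_2$ you would already need to know that $\pi_2$ carries $\mathcal W^c_f$-leaves into $\mathcal W^c_g$-leaves, which is part of the leaf-conjugacy conclusion you are trying to establish. Second, even granting a quasi-conjugacy for $(g,g)$, the uniqueness clause of Theorem~B requires \emph{both} conditions in \eqref{cond1}: not just $d(\pi,\id_M)<\varepsilon$ but also the normalization $\exp_x^{-1}(\pi(x))\in \tilde E^s_x\oplus\tilde E^u_x$. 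The composite $\pi_2\circ\pi_1$ has no reason to satisfy this second condition, since $\pi_1(x)=\exp_x(v_1(x))$ and $\pi_2(\pi_1(x))=\exp_{\pi_1(x)}(v_2(\pi_1(x)))$ do not combine to place $\exp_x^{-1}(\pi_2\circ\pi_1(x))$ in the transversal at $x$. Without that normalization the uniqueness statement is simply false: any small slide of $\id_M$ along $\mathcal W^c_g$-leaves solves the $(g,g)$ equation too.

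The paper avoids both issues by not attempting to build an inverse. It constructs $\pi$ once, with $\exp_x^{-1}(\pi(x))$ in the \emph{smooth} bundle $\tilde E^s_x\oplus\tilde E^u_x$; this $C^1$ choice already forces $\pi$ restricted to each local $\mathcal W^c_g$-leaf to be injective. Then it proves directly, by a growth-rate contradiction, that $\pi(\mathcal W^c_g(x))\subset\mathcal W^c_f(\pi(x))$: if $\pi(y_k)$ had a definite $E^u_f$-component off $\mathcal W^c_f(\pi(x))$ for $y_k\to x$ along $\mathcal W^c_g(x)$, then iterating and using that $\tau^{(2)}$ slides only along $\mathcal W^c_f$-leaves would make the distance between the $\mathcal W^c_f$-plaques of $\pi(g^n x)$ and $\pi(g^n y_k)$ grow like $\tilde\mu^n$ with $\tilde\mu>\mu'_g$, contradicting the uniform bound $d(\pi(g^n x),\pi(g^n y_k))\le 3\varepsilon$ coming from $d(\pi,\id)<\varepsilon$. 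With $\pi$ injective on center leaves and leaf-preserving, Lemma~5.11 of \cite{Pesin} then gives that $\pi$ is a homeomorphism and a leaf conjugacy; the $\mathcal W^{cs}$ and $\mathcal W^{cu}$ statements follow by the same argument.
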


\begin{remark}
In fact, if $\pi$ is one to one, then for any $y\in M$,
$x=\pi^{-1}(f^{-1}(y))$ is uniquely determined.
Hence we can define $\tau^{(2)}: M\to M$ by
$\tau^{(2)}(y)=\tau^{(2)}_{g(\pi^{-1}\circ f^{-1}(y))}(y)$ and obtain
$$
\pi\circ g(x)=\tau^{(2)}\circ f\circ\pi(x), \quad  x\in M.
$$

$\tau^{(3)}$ can be defined in a similar way.
\end{remark}

The main result of Theorem C, in particular, leaf conjugacy,
is well known (\cite{Hirsch1}, \cite{Pesin}).
Our proof provides a different approach.

\begin{Example}
Let $N$ be a smooth compact Riemannian manifold and
$h: N\longrightarrow N$ be an Anosov diffeomorphism. Then the
diffeomorphism
$$
f=h\times \id_{S^1}: N\times S^1\longrightarrow N\times S^1
$$
is quasi-stable. In particular, if $R$ is a rotation on $S^1$
close to the identity, and
$$
g=h\times R: N\times S^1\longrightarrow N\times S^1,
$$
then we can take $\pi=\id_{N\times S^1}$ and $\tau^{(2)}=\id_N\times R$
in Theorem C.

Moreover, Theorem E below gives that the topological entropy
is constant in a neighborhood of $f$ in $C^1$ topology.
\end{Example}

As applications of Theorem C, we have the following results about
the continuity of the entropy.

We say that a diffeomorphism $g$ is a time $1+\tau$ map of a flow $\psi$
for some real function $\tau$ on $M$ if $g(x)=\psi^{1+\tau(x)}(x)$
for any $x\in M$.

\begin{ThmD}\label{ThmD}
Let $f$ be the time one map of an Anosov flow $\varphi$.
Then for any diffeomorphism $g$ $C^1$-close to $f$,
there is a flow $\psi$ and a continuous real function $\tau$
on $M$ such that $g$ is the time $1+\tau\circ f$ map of the flow $\psi$.
Hence the topological entropy function is continuous at $f$ in $\Diff^1(M)$
with $C^1$ topology.
\end{ThmD}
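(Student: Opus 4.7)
The time one map $f=\varphi^1$ of an Anosov flow is partially hyperbolic, and its center foliation $\mathcal{W}^c_f$ is the flow foliation, which is one dimensional and $C^1$ (being tangent to the $C^1$ generating vector field). Hence $f$ satisfies the hypotheses of Theorem B$'$ and, since $g$ is $C^1$-close to $f$, also of Theorem C. Applying these theorems I get a homeomorphism $\pi:M\to M$ with $d(\pi,\id_M)<\varepsilon$ and a continuous function $\widetilde\tau:M\to\mathbb{R}$ with $|\widetilde\tau|$ small, such that
\begin{equation*}
\pi\circ g(x)\;=\;\tau^{(3)}_{f(x)}\circ f\circ\pi(x)\;=\;\varphi^{\widetilde\tau(f(x))}\bigl(f(\pi(x))\bigr)\;=\;\varphi^{\,1+\widetilde\tau(f(x))}(\pi(x)).
\end{equation*}

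Now I define a topological flow $\psi$ on $M$ by $\psi^t:=\pi^{-1}\circ\varphi^t\circ\pi$. Continuity of $\psi^t(x)$ in $(t,x)$ follows from the fact that $\pi$ is a homeomorphism and $\varphi$ is a continuous flow, and the group property $\psi^{t+s}=\psi^t\psi^s$ is immediate from the conjugation. The identity above then rearranges to
\begin{equation*}
g(x)\;=\;\pi^{-1}\bigl(\varphi^{1+\widetilde\tau(f(x))}(\pi(x))\bigr)\;=\;\psi^{\,1+\widetilde\tau(f(x))}(x),
\end{equation*}
so with $\tau:=\widetilde\tau$ the diffeomorphism $g$ is precisely the time $1+\tau\circ f$ map of $\psi$, which proves the first assertion.

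For the entropy statement I combine this with the result of \cite{Thomas}, which says that the topological entropy of the time one map of a continuous flow depends continuously on the flow together with the (continuous) time reparametrization, provided the reparametrization stays uniformly close to $1$. As $g\to f$ in the $C^1$ topology, the uniqueness clauses in Theorem B$'$ and Theorem C force both $\pi\to\id_M$ and $\widetilde\tau\to 0$ uniformly; in particular $1+\tau\circ f\to 1$ uniformly, and the flow $\psi=\pi^{-1}\varphi\pi$ converges to $\varphi$ in the $C^0$ topology on flows. Since $h_{\top}(\psi^1)=h_{\top}(\varphi^1)=h_{\top}(f)$ by topological conjugacy, Thomas's continuity result yields $h_{\top}(g)\to h_{\top}(f)$.

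The only real work is bookkeeping: checking that Theorem B$'$/Theorem C applies (one-dimensional $C^1$ center, $g$ $C^1$-close), that $\widetilde\tau\to 0$ as $g\to f$ (this is where the $\varepsilon$--$\delta$ dependence from the previous theorems is used), and then invoking \cite{Thomas} in the correct form. The main conceptual step is the observation that the $\tau^{(3)}$-motion along the one dimensional center coming from Theorem B$'$ is already a flow-time shift along $\varphi$, so conjugating that flow by $\pi$ produces the desired $\psi$ with no further construction.
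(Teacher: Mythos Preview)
Your proof is correct and follows the same route as the paper: apply Theorem~B$'$ together with Theorem~C, conjugate $\varphi$ by $\pi$ to define $\psi$, and recognize $g$ as the time $1+\widetilde\tau\circ f$ map of $\psi$. The only difference is that the paper makes the appeal to \cite{Thomas} quantitative, combining Thomas's Theorem~B with Bowen's relation $h(\varphi^t)=|t|\,h(\varphi^1)$ to obtain the explicit sandwich $(1+\min_M\widetilde\tau)^2 h(f)\le h(g)\le (1+\max_M\widetilde\tau)^2 h(f)$, whereas you phrase it as a continuity statement; since $h(\psi^1)=h(f)$ holds exactly by conjugacy, your observation that $\psi\to\varphi$ in $C^0$ is not actually needed.
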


Y. Hua proved the second part of the theorem under the condition that
$f$ is topologically transitive (\cite{Hua}).

Let $f$ be a partially hyperbolic diffeomorphism with center
foliation $\mathcal{W}^c_f$. For a smooth surface $\S$, we denote
$\S\perp \mathcal{W}^c_f$ if for any $x\in \S\cap \mathcal{W}^c_f$,
$T_x\S\perp E^c_f(x)$. For any two surfaces $\S_1$ and
$\S_2$ that are smooth or are images of homeomorphisms of some
smooth surfaces, the holonomy map $\t^c: \S_1\to \S_2$ is a
continuous map defined by sliding along the
$\mathcal{W}^c_f$-leaves, i.e. for $x\in\Sigma_1$,
$\theta^c(x)\in\Sigma_2\cap \mathcal{W}^c_f(x)$.

\begin{definition}
A partially hyperbolic diffeomorphism $f$ with integrable center
foliation $\mathcal{W}^c_f$ is said to have  \emph{almost parallel
center foliation} if for any $\alpha>0$, there exists constant
$\b>0$, such that for any smooth surfaces $\S_1, \S_2$ with
$\S_1\perp\mathcal{W}^c_f$ and $\S_2\perp \mathcal{W}^c_f$, for any
$x, y\in \S_1$ with $d(x,y)\le \beta$, we have $d(\t^c(x),
\t^c(y))\le \alpha$ whenever they are defined.
\end{definition}

\begin{remark}
In the definition we require $\S_1\perp \mathcal{W}^c_f$ and
$\S_2\perp \mathcal{W}^c_f$ only for convenience. It is clear that
we can change the definition by requiring the angles between $\S_1$,
$\S_2$ and $\mathcal{W}^c_f$ uniformly bounded from below.
\end{remark}

\begin{remark}
The requirements of the definition mean that
the holonomy maps along the center foliation are equicontinuous
for all possible holonomy maps whenever they are defined.
\end{remark}

It is clear that each of the maps $f$ and $g$ given by the above example has
almost parallel center foliation, while Anosov flows do not.

\begin{ThmE}\label{ThmE}
Let $f$ be a partially hyperbolic diffeomorphism as in Theorem B.
If the center foliation of $f$ is  almost parallel, then any
diffeomorphism $g$ that is $C^1$-close to $f$ also has almost
parallel center foliation.

Moreover, if the center foliation of $f$ is one dimensional,
then the topological entropy function is locally constant
in $\Diff^1(M)$;
if $f\in\Diff^\infty(M)$ and the center foliation of $f$ is two dimensional,
then the topological entropy function is continuous near $f$
in $\Diff^\infty(M)$ with $C^1$ topology.

\end{ThmE}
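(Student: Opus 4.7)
The plan is to combine the leaf conjugacy provided by Theorem C with a center-direction multiplicity estimate for $(n,\varepsilon)$-separated sets. For the first assertion, suppose $g$ is $C^1$-close to $f$ and let $\pi$ be the leaf conjugacy of Theorem C, so that $d(\pi,\id_M)<\varepsilon$, the inverse $\pi^{-1}$ is also close to $\id_M$, and $\pi$ sends $\mathcal W^c_g$-leaves to $\mathcal W^c_f$-leaves. Because $M$ is compact, both $\pi$ and $\pi^{-1}$ are uniformly continuous. Given $\alpha>0$, pick $\alpha_1\in(0,\alpha)$ so that $\pi^{\pm 1}$ send any ball of radius $\alpha_1$ into a ball of radius $\alpha/3$, then apply the almost-parallel hypothesis of $f$ at level $\alpha/3$ to obtain $\beta_1>0$. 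A surface $\widetilde\Sigma$ transverse to $\mathcal W^c_g$ is mapped by $\pi$ to a topological surface whose angle with $\mathcal W^c_f$ is bounded below (since $\pi$ is $C^0$-close to the identity and $\mathcal W^c_g$ is $C^0$-close to $\mathcal W^c_f$), so by the first remark following the definition we may use $\pi(\widetilde\Sigma_i)$ as admissible surfaces for $f$. Writing the $\mathcal W^c_g$-holonomy as $\pi^{-1}\circ(\text{$\mathcal W^c_f$-holonomy})\circ\pi$ then yields the required $\beta$ for $g$.

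For the entropy assertions I transport separated sets through $\pi$. Iterating the identity $\pi\circ g=\tau^{(2)}\circ f\circ\pi$ gives $\pi\circ g^n=T_n\circ f^n\circ\pi$, where $T_n$ is a composition of maps moving each point along its own center leaf at every time step. If $\{x_i\}$ is an $(n,\varepsilon)$-separated set for $g$, then $\{\pi(x_i)\}$ is a set whose $f$-iterates either $(n,\varepsilon')$-separate transversally to $\mathcal W^c_f$, or else land on the same transverse disk $\Sigma_\varepsilon$ and are distinguished only by their position inside a single center leaf; this is the content of the almost-parallel hypothesis, which prevents center-leaf drift from creating transverse separation. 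When $\dim\mathcal W^c_f=1$ the number of points in a single center leaf that can be pairwise separated on a transverse section is uniformly bounded (a $1$-manifold with equicontinuous holonomies admits only a bounded separated family), so the Bowen-type count gives $h_{\top}(g)=h_{\top}(f)$ and entropy is locally constant in $\Diff^1(M)$. When $\dim\mathcal W^c_f=2$ the same argument yields only a subexponential (polynomial) bound on the center-leaf multiplicity, hence $\liminf_{g\to f}h_{\top}(g)\ge h_{\top}(f)$; the reverse inequality $\limsup_{g\to f}h_{\top}(g)\le h_{\top}(f)$ holds in $\Diff^\infty(M)$ by the Yomdin--Newhouse upper semicontinuity of $h_{\top}$, and the two bounds combine to give continuity of $h_{\top}$ at $f$.

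The main obstacle I anticipate is the multiplicity estimate in the two-dimensional case: converting equicontinuity of $\mathcal W^c_f$-holonomies on a $2$-disk into a subexponential bound on the number of $(n,\varepsilon)$-separated orbits that differ only by motion along a single center leaf requires genuine volume control of the transverse disks $\Sigma_\varepsilon$, and it is this step that drives both the smoothness assumption and most of the technical work. The remaining ingredients---persistence of almost parallelism, orbit transport via the leaf conjugacy, Bowen-type counting, and Yomdin's upper semicontinuity---are essentially in hand once Theorem C has been invoked.
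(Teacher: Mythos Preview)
Your treatment of the first assertion is close in spirit to the paper's Lemma~4.1, but note one technical slip: $\pi$ is only a homeomorphism, so $\pi(\widetilde\Sigma)$ is in general not a smooth (or even Lipschitz) surface and speaking of its ``angle with $\mathcal W^c_f$ bounded below'' is not meaningful. The paper avoids this by comparing foliated rectangle neighborhoods $\mathcal R_g(y,\beta)=\bigcup_{z\in\Sigma_g(y,\beta)}\mathcal W^c_g(z,\beta)$ with balls, using only uniform continuity of $\pi^{\pm1}$.

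For the entropy part your route is genuinely different from the paper's, and there is a real gap. The paper does \emph{not} transport $(n,\varepsilon)$-separated sets. Instead it proves that the unstable volume growth rates coincide, $\chi^u(f)=\chi^u(g)$, and then invokes the arguments of Hua--Saghin--Xia relating $\chi^u$ to $h_{\top}$. The point is that $\pi(\mathcal W^u_g(x,r))$ sits inside a single $\mathcal W^{cu}_f$-leaf, so sliding it along $\mathcal W^c_f$ onto the fixed transversal $f^n\mathcal W^u_f(\pi(x))$ is an honest holonomy map $\theta^c_n$; almost parallelism then gives uniform continuity of $\psi_n=\theta^c_n\circ\pi$ \emph{independently of $n$}, and a covering/packing count (Sublemma~4.3) yields the volume comparison.

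Your argument hinges on a different map: the drift $T_n$ with $\pi\circ g^n=T_n\circ f^n\circ\pi$. This $T_n$ is \emph{not} a holonomy map between two fixed transversals; it moves each point along its own center leaf by an amount that depends on the point and on $n$. The almost parallel hypothesis says nothing about such maps---it controls $\theta^c:\Sigma_1\to\Sigma_2$ for fixed $\Sigma_1,\Sigma_2$, not a family of leafwise translations of varying length. Consequently the claim that ``almost parallelism prevents center-leaf drift from creating transverse separation'' is unjustified: from $d(f^k\pi x_i,f^k\pi x_j)\le\varepsilon'$ and almost parallelism you can only conclude that the holonomy image of $f^k\pi x_j$ on the transversal through $\pi g^k x_i$ is close to $\pi g^k x_i$; it need not be close to $\pi g^k x_j$, which lies elsewhere on the same center leaf. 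The same objection undermines both the bounded-multiplicity claim in the $1$-dimensional case (equicontinuous \emph{holonomy} does not bound $(n,\varepsilon)$-separation \emph{within} a leaf under iteration) and the subexponential bound in the $2$-dimensional case. Finally, check the direction of your inequalities in the $2$-dimensional discussion: a subexponential multiplicity bound on transported separated sets yields $h_{\top}(g)\le h_{\top}(f)$, not the lower bound you state; you would need Yomdin for the \emph{upper} bound and the transport argument for the \emph{lower}, and the latter is exactly where the drift problem bites.
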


For the case that the dimension of center subbundle of $f$ is
one or two, the same conclusions are obtained in \cite{Xia}
under the assumption that the strongly stable and unstable foliations of $f$
stably carry some unique nontrivial homologies.
Our proof uses some idea in the paper.

Denote by $\mathfrak{X}$ the Banach space of continuous vector fields
on $M$ with the norm
$$
\|\omega\|=\sup_{x\in M}\|\omega(x)\|,  \quad w \in \mathfrak{X}.
$$
In other words, each element of $\mathfrak{X}$ is a continuous
section of the tangent bundle $TM$.
Similarly, we denote  by $\mathfrak{X}^s,\mathfrak{X}^c$ and
$\mathfrak{X}^u$ the space of continuous sections of the stable,
center and unstable bundles $E^s, E^c$ and $E^u$ respectively.
Also, we denote
$\mathfrak{X}^{us}=\mathfrak{X}^u\oplus \mathfrak{X}^s$. Let
$\Pi^{s}_x:T_{x}M\to E^{s}_x$ be the projection onto
$E^{s}_x$ along $E^c_x\oplus E^u_x$. $\Pi^{c}_x$ and $\Pi^{u}_x$ are
defined in a similar way.

Recall that $\|\cdot\|$ is the norm on $TM$. We define the norm
$\|\cdot\|_1$ on $TM$ by $\|w\|_1=\|u\|+\|v\|$ if $w=u+v\in T_xM$
with $u\in E^c_x$ and $v\in E^u_x\oplus E^s_x$. Similarly,
if $w=u+v\in \mathfrak{X}$ with $u\in \mathfrak{X}^c$ and
$v\in\mathfrak{X}^{us}$, we also define $\|w\|_1=\|u\|+\|v\|$. By
triangle inequality and the fact that the angles between $E^c$ and
$E^u\oplus E^s$ are uniformly bounded away from zero, we know that there
exists a constant $L$ such that
\begin{equation}\label{fdefL}
\|w\|\le \|w\|_1\le L\|w\|.
\end{equation}

For any $\ve>0$, we denote
$$
\mathfrak{B}(\ve)=\{w\in \mathfrak{X}: \|w\|\le \ve\},\qquad
\mathfrak{B}^{us}(\ve)=\{w\in \mathfrak{X}^{us}: \|w\|\le \ve\},
$$
$$
\mathfrak{B}_1(\ve)=\{w\in \mathfrak{X}: \|w\|_1\le \ve\}.
$$

\section{Topological quasi-stability}\label{STopqs}

\subsection{The general case}

\begin{proof}[Proof of Theorem A]
We choose
\begin{equation}\label{epsilon0}
\varepsilon_0\in (0,\rho)
\end{equation}
small enough such that any map $\pi$ with $d(\pi,\id_M)<\varepsilon_0$ must be
surjective (see e.g. Lemma 3 of \cite{Walters} for existence of such $\ve_0$).

To find a continuous center section $u\in
\mathfrak{X}^c$ and a surjective continuous map
$\pi:M\longrightarrow M$ satisfying \eqref{feqnThmA} and the conditions in
(\ref{cond1}) of this theorem, we shall first try to solve the
equation
\begin{equation}\label{feqn2ThmA}
\pi\circ g(x)=\tau^{(1)}_{f(x)}\circ f\circ\pi(x)
\end{equation}
for unknown $u$ and $\pi$. Putting $h=g\circ f^{-1}$ and
$\pi(x)=\exp_x(v(x))$ for $v\in \mathfrak{B}^{us}(\rho)$, and
replacing $x$ by $f^{-1}(x)$, we see that \eqref{feqnThmA} is
equivalent to
\begin{equation}\label{feqn3ThmA}
\exp_x^{-1}\circ \exp_{h(x)}\bigl(v(h(x))\bigr)
=\exp_x^{-1}\circ\tau^{(1)}_x\circ f
   \circ \exp_{f^{-1}(x)}\bigl(v(f^{-1}(x))\bigr).
\end{equation}

By the definition of $\tau^{(1)}_x$, we have
\begin{equation*}\label{equation5 of lemma01}
\exp_x^{-1}\circ\tau^{(1)}_x\circ f
\circ\exp_{f^{-1}(x)}\bigl(v(f^{-1}(x))\bigr) =u(x)+
\exp_x^{-1}\circ f\circ\exp_{f^{-1}(x)}\bigl((v(f^{-1}(x))\bigr).
\end{equation*}
Define an operator $\beta: \mathfrak{B}(\rho)\to \mathfrak{X}$ and a
linear operator $F: \mathfrak{X}\to \mathfrak{X}$ by
\begin{eqnarray}
\label{fdefbeta} &&\beta(w)(x)=\exp_x^{-1}\circ f\circ
        \exp_{f^{-1}(x)}\bigl((w(f^{-1}(x))\bigr) , \\
\label{fdefF} &&(Fw)(x)=d_{f^{-1}(x)}fw(f^{-1}(x)).
\end{eqnarray}
Clearly, $Fw=d_0\beta w$. Let
\begin{equation}\label{fdefs}
\eta(w)(x)=\beta(w)\bigl(x\bigr) - (d_0\beta w)\bigl(x\bigr).
\end{equation}
Then we can write
\begin{equation}\label{feqn3right}
\exp_x^{-1}\circ\tau^{(1)}_x\circ f\circ
\exp_{f^{-1}(x)}(v(f^{-1}(x))) =(Fv)(x)+u(x)+\eta(v)(x).
\end{equation}

Define a linear operator $J_h: \mathfrak{B}(\rho)\to\mathfrak{X}$ by
\begin{equation}\label{fdefJh}
(J_hw)(x)=\sum_{i=s,c,u}\Pi_x^i\circ
d_0(\exp_x^{-1}\circ\exp_{h(x)})\circ\Pi_{h(x)}^iw(h(x))
\end{equation}
for any $w\in \mathfrak{B}(\rho)$. Set
\begin{equation}\label{}
\begin{aligned}
\theta_h(w)(x)
=&\exp_x^{-1}\circ\exp_{h(x)}(w(h(x)))-d_0(\exp_x^{-1}
  \circ\exp_{h(x)})w(h(x))\notag\\
+&\sum_{i,j=s,c,u,i\neq j}\Pi_x^i\circ
d_0(\exp_x^{-1}\circ\exp_{h(x)})\circ\Pi_{h(x)}^jw(h(x)).\notag
\end{aligned}
\end{equation}
Then we have
\begin{equation}\label{feqn3left}
\exp_x^{-1}\circ \exp_{h(x)}(v(h(x)))=(J_hv)(x)+\theta_h(v)(x).
\end{equation}
Also we mention that by definition,
\begin{equation}\label{ftheta0h}
\theta_h(0)(x)=\exp_x^{-1}\circ\exp_{h(x)} 0 =\exp_x^{-1} h(x).
\end{equation}

Therefore, by \eqref{feqn3left} and \eqref{feqn3right},
\eqref{feqn3ThmA} is equivalent to
$$
J_hv+\theta_h(v)=Fv+u+\eta(v),
$$
further, is equivalent to
$$
-J_h^{-1}u+(\id_{\mathfrak{X}}-J_h^{-1}F)v=J_h^{-1}(\eta(v)-\theta_h(v)).
$$
Define a linear operator $P_h$ from a neighborhood of $0\in
\mathfrak{X}$ to $\mathfrak{X}$ by
\begin{equation}\label{Ph0}
P_h\omega=-J_h^{-1}u+(\id_{\mathfrak{X}}-J_h^{-1}F)v
\end{equation}
for $\omega=u+v\in \mathfrak{X}$, where $u\in \mathfrak{X}^c$ and
$v\in \mathfrak{X}^{us}$.

Define an operator $\Phi_h$ from a neighborhood of $0\in
\mathfrak{X}$ to $\mathfrak{X}$ by
$$
\Phi_h(u+v)=P_h^{-1}J_h^{-1}(\eta(v)-\theta_h(v)).
$$
Hence, equation (\ref{feqn2ThmA}) is equivalent to
\begin{equation}\label{feqn4ThmA}
\Phi_h(u+v)=u+v,
\end{equation}
namely, $u+v$ is a fixed point of $\Phi_h$.

By Lemma~\ref{LThmA} below, we know that for any $\ve\in
(0,\varepsilon_0)$ there exists $\delta=\delta(\ve)$ such that for any homeomorphism
$h$ with $d(h,\id_M)\le \delta$, $\Phi_h: \mathfrak{B}_1(\ve)\to
\mathfrak{B}_1(\ve)$ is a contracting map, and therefore has a fixed
point in $\mathfrak{B}_1(\ve)$. Hence, (\ref{feqn2ThmA}) has a
unique solution.
\end{proof}

\begin{lemma}\label{LThmA}
We can reduce $\varepsilon_0$ in (\ref{epsilon0}) if necessary such that for any $\ve\in
(0,\varepsilon_0)$ there exists $\delta=\delta(\ve)>0$ such
that for any homeomorphism $h$ of $M$ with $d(h,\id_M)\le \delta$,
$\Phi_h(\mathfrak{B}_1(\ve))\subset \mathfrak{B}_1(\ve)$ and for any
$\omega,\omega'\in \mathfrak{B}_1(\ve)$,
$$
\|\Phi_h(\omega)-\Phi_h(\omega')\|_1 \leq
\frac{1}{2}\|\omega-\omega'\|_1.
$$
\end{lemma}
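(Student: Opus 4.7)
The plan is to verify three items and combine them: (a) $\|J_h^{\pm 1}\|$ is uniformly bounded for $h$ close to $\id_M$, with $J_h\to\id_{\mathfrak{X}}$ as $d(h,\id_M)\to 0$; (b) $P_h$ is invertible with $\|P_h^{-1}\|$ uniformly bounded; and (c) on $\mathfrak{B}_1(\varepsilon)$ the nonlinearities $\eta$ and $\theta_h$ are small with Lipschitz constants tending to zero as $\varepsilon$ and $\delta:=d(h,\id_M)$ tend to zero. Granted these, one obtains $\|\Phi_h(0)\|_1\le C\delta$ and $\mathrm{Lip}(\Phi_h|_{\mathfrak{B}_1(\varepsilon)})\le\kappa(\varepsilon,\delta)$ with $\kappa\to 0$, and the lemma follows by first choosing $\varepsilon_0$ so that $\kappa(\varepsilon,\delta)\le\tfrac{1}{2}$ for all $\varepsilon\le\varepsilon_0$ and sufficiently small $\delta$, then choosing $\delta=\delta(\varepsilon)$ so that $\|\Phi_h(0)\|_1\le\varepsilon/2$.

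The structural observation behind (a) and (b) is that both $F$ and $J_h$ are block-diagonal with respect to the splitting $\mathfrak{X}=\mathfrak{X}^c\oplus\mathfrak{X}^{us}$: $F$ because $df$ preserves $E^c$ and $E^u\oplus E^s$, and $J_h$ by the form of \eqref{fdefJh}. When $h=\id_M$ we have $\exp_x^{-1}\circ\exp_x=\id$ on $T_xM$, so $J_{\id_M}=\id_{\mathfrak{X}}$; by continuity of $\exp$ and of the invariant splitting, $\|J_h-\id\|\to 0$ as $\delta\to 0$, which gives (a). Consequently $P_h$ also preserves the splitting: on $\mathfrak{X}^c$ it acts as $-J_h^{-1}$ (trivially invertible with bounded inverse), and on $\mathfrak{X}^{us}$ it acts as $\id_{\mathfrak{X}}-J_h^{-1}F$, which is invertible because $F$ is hyperbolic there. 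Specifically, $\|F|_{\mathfrak{X}^s}\|\le\lambda<1$ gives $(\id_{\mathfrak{X}}-F)^{-1}=\sum_{k\ge 0}F^k$ on $\mathfrak{X}^s$, while $F|_{\mathfrak{X}^u}$ is bijective with $\|F^{-1}|_{\mathfrak{X}^u}\|\le\mu^{-1}<1$, so $\id_{\mathfrak{X}}-F=-F(\id_{\mathfrak{X}}-F^{-1})$ inverts on $\mathfrak{X}^u$. A Neumann-series perturbation then inverts $\id_{\mathfrak{X}}-J_h^{-1}F$ for $J_h$ close to $\id$, giving (b) in the $\|\cdot\|_1$ norm (the norm $\|\cdot\|_1$ is designed so that a block-diagonal operator has norm equal to the maximum of its block norms).

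For (c): since $\beta$ is $C^1$ with $\beta(0)=0$ and $d_0\beta=F$, the map $\eta(v)=\beta(v)-Fv$ is the nonlinear remainder, and by uniform continuity of $d\beta$ on $\mathfrak{B}(\rho)$ its Lipschitz constant on $\mathfrak{B}_1(\varepsilon)$ tends to zero as $\varepsilon\to 0$. For $\theta_h$, \eqref{ftheta0h} gives $\|\theta_h(0)\|\le\delta$; the derivative $d_v\theta_h$ splits into a piece $d_{v(h(x))}(\exp_x^{-1}\circ\exp_{h(x)})-d_0(\exp_x^{-1}\circ\exp_{h(x)})$, small for small $\varepsilon$ by uniform continuity of $d(\exp_x^{-1}\circ\exp_{h(x)})$, and the off-diagonal sum $\sum_{i\neq j}\Pi_x^i\circ d_0(\exp_x^{-1}\circ\exp_{h(x)})\circ\Pi_{h(x)}^j$, whose operator norm is $O(\delta)$ because it vanishes identically at $h=\id_M$. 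The main delicate point is precisely this last estimate: the off-diagonal projections appear in both $J_h-\id$ and in $\theta_h$, and their operator norms must be shown to tend to zero as $h\to\id_M$; this uses only continuity of the partially hyperbolic splitting together with the fact that these expressions cancel exactly at $h=\id_M$.
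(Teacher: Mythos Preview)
Your proposal is correct and follows essentially the same approach as the paper. The paper organizes your items (a)--(c) into three sublemmas (bounding $\|P_h^{-1}\|_1\le 2/(1-\lambda)$ via the block-diagonal Neumann-series argument you describe, and bounding the Lipschitz constants $C(\varepsilon)$ of $\eta$ and $K(h)$ of $\theta_h$) and then combines them exactly as you outline; the only cosmetic difference is that the paper controls $\mathrm{Lip}(\theta_h)$ purely in terms of $d(h,\id_M)$ (observing that $\theta_{\id_M}\equiv 0$, so $d_w\theta_h\to 0$ uniformly as $h\to\id_M$) rather than splitting it into an $o_\varepsilon(1)$ piece and an $O(\delta)$ piece.
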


\begin{proof}
Recall that the constant $L$ is given in \eqref{fdefL}.

Reduce $\varepsilon_0$ if necessary such that for any $\varepsilon\in (0,\varepsilon_0)$,
\begin{equation}\label{f1LThmA}
\frac{4L }{1-\lambda}C(\varepsilon)<\frac{1}{4},
\end{equation}
where $C(\varepsilon)$ is the Lipschitz constant of $\eta(v)$ given in
Sublemma \ref{SL2ThmA}.  This is possible since by the sublemma,
$C(\varepsilon)\to 0$ as $\ve\to 0$. Note that $\varepsilon$ only
depends on $f$.

Then we take $\delta=\delta(\varepsilon)$ such that
\begin{equation}\label{f2LThmA}
\frac{4L }{1-\lambda}\delta<\frac{1}{4}\varepsilon;
\end{equation}
and such that for any homeomorphism $h$ with $d(h,\id_M)<\delta$,
\begin{equation}\label{f3LThmA}
\max\{\|J_h\|,\;\|J_h^{-1}\|\} \le \min\left\{2, \
\frac{1+\lambda^{-1}}{2}\right\},
\end{equation}
where $J_h$ is defined in \eqref{fdefJh}, and
\begin{equation}\label{f4LThmA}
\frac{4L }{1-\lambda}K(h)<\frac{1}{4},
\end{equation}
where $K(h)$ is the Lipschitz constant of $\theta_h(\cdot)$ given in
Sublemma \ref{SL3ThmA}.  This is possible since by the sublemma,
$K(h)\to 0$ as $\ve\to 0$.

By \eqref{f3LThmA}, Sublemma~\ref{SL1ThmA} below can be applied and
therefore we get
\begin{equation}\label{f5LThmA}
\|P_h^{-1}\|_1\leq \frac{2}{1-\lambda}.
\end{equation}
Note that $J_h(\mathfrak{X}^i)=\mathfrak{X}^i$ for $i=s,c,u$. Then
it is easy to check that $\|J_h^{-1}\|_1\le \|J_h^{-1}\|$. Hence by
\eqref{f3LThmA}, we have  $\|J_h^{-1}\|_1\le 2$. Also, by
\eqref{fdefbeta}, $\beta(0)=0$ and therefore by \eqref{fdefs},
$\eta(0)=0$; and by \eqref{ftheta0h}, $\|\theta_h(0)\|\le \delta$.

Take  $\omega=u+v\in \mathfrak{B}_1(\ve)$ with $u\in \mathfrak{X}^c$
and $v\in \mathfrak{X}^{us}$. By using the above estimates,
Sublemma~\ref{SL2ThmA} and Sublemma~\ref{SL3ThmA}, and then \eqref{f1LThmA},
\eqref{f2LThmA} and \eqref{f4LThmA}, we can get
\begin{equation*}
\begin{split}
\|\Phi_h(\omega)\|_1
&\leq \|P_h^{-1}\|_1\cdot\|J_h^{-1}\|_1\cdot\|\eta(v)-\theta_h(v)\|_1 \\
&\leq \frac{2}{1-\lambda}\cdot 2\cdot L  \|\eta(v)-\theta_h(v)\| \\
&\leq \frac{4L  }{1-\lambda}
   (\|\eta(v)-\eta(0)\|+\|\theta_h(v)-\theta_h(0)\|+\|\theta_h(0)\|)\\
&\leq \frac{4L }{1-\lambda}
   (C(\varepsilon)\|\omega\|_1+K(h)\|\omega\|_1+\delta)\\
&\leq \frac{1}{4}\|\omega\|_1+\frac{1}{4}\|\omega\|_1
    +\frac{1}{4}\varepsilon
\leq \frac{3\varepsilon}{4},
\end{split}
\end{equation*}
which implies that
$\Phi_h(\mathfrak{B}_1(\ve))\subset\mathfrak{B}_1(\ve)$.

Similarly, for two elements $\omega=u+v,\;\omega'=u'+v'\in
\mathfrak{B}_1(\ve)$ with $u,u'\in \mathfrak{X}^c$ and $v,v'\in
\mathfrak{X}^{us}$, we have
\begin{equation*}
\begin{split}
\|\Phi_h(\omega)-\Phi_h(\omega')\|_1 &\leq\frac{4}{1-\lambda}
     (\|\eta(v)-\eta(v')\|_1+\|\theta_h(v)-\theta_h(v')\|_1)\\
&\leq\frac{4L }{1-\lambda}(\|\eta(v)-\eta(v')\|+\|\theta_h(v)-\theta_h(v')\|)\\
&\leq\frac{4L }{1-\lambda}(C(\varepsilon_0)\|\omega-\omega'\|_1
       +K(h)\|\omega-\omega'\|_1) \\
&\leq\frac{1}{2}\|\omega-\omega'\|_1.
\end{split}
\end{equation*}
This proves that $\Phi_h: \mathfrak{B}_1(\ve)\to
\mathfrak{B}_1(\ve)$ is a contraction.
\end{proof}

\begin{sublemma}\label{SL1ThmA}
For any homeomorphism $h$ of $M$ such that $J_h$ satisfies
\eqref{f3LThmA}, $P_h$ is invertible and
\begin{eqnarray}\label{f1SLThmA}
\|P_h^{-1}\|_1\leq \frac{2}{1-\lambda}.
\end{eqnarray}
\end{sublemma}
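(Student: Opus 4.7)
The plan is to exploit that both $F$ and $J_h$ respect the splitting $\mathfrak{X} = \mathfrak{X}^s \oplus \mathfrak{X}^c \oplus \mathfrak{X}^u$, reducing $P_h$ to a block-diagonal operator that can be inverted block by block via Neumann series.

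First I would verify that $F$ preserves each $\mathfrak{X}^i$ (because $df$ preserves $E^s, E^c, E^u$) and that $J_h$ does the same, directly from the form \eqref{fdefJh} in which the projections $\Pi^i_{h(x)}$ and $\Pi^i_x$ force $J_h(\mathfrak{X}^i) \subseteq \mathfrak{X}^i$. Inspecting the definition \eqref{Ph0} of $P_h$ then shows that $P_h$ acts as $-J_h^{-1}$ on $\mathfrak{X}^c$ and as $\id - J_h^{-1}F$ on each of $\mathfrak{X}^s$ and $\mathfrak{X}^u$ separately. On the center block the inverse is simply $-J_h$, whose norm is at most $2$ by \eqref{f3LThmA}. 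On the stable block, the contraction rate $\|F|_{\mathfrak{X}^s}\| \le \lambda$ gives
$$\|J_h^{-1}F|_{\mathfrak{X}^s}\| \;\le\; \frac{1+\lambda^{-1}}{2}\cdot\lambda \;=\; \frac{\lambda+1}{2} \;<\; 1,$$
so a Neumann series produces $(\id - J_h^{-1}F|_{\mathfrak{X}^s})^{-1}$ with norm at most $\frac{2}{1-\lambda}$.

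The main obstacle is the unstable block, on which $J_h^{-1}F$ is expanding rather than contracting, so a direct Neumann series diverges. The workaround is to factor
$$\id - J_h^{-1}F \;=\; -J_h^{-1}F\,\bigl(\id - F^{-1}J_h\bigr)$$
on $\mathfrak{X}^u$: since $\|F^{-1}|_{\mathfrak{X}^u}\| \le \mu^{-1} = \lambda$, the operator $F^{-1}J_h$ satisfies $\|F^{-1}J_h|_{\mathfrak{X}^u}\| \le (\lambda+1)/2 < 1$, and Neumann inversion yields
$$(\id - J_h^{-1}F|_{\mathfrak{X}^u})^{-1} \;=\; -(\id - F^{-1}J_h|_{\mathfrak{X}^u})^{-1}\,F^{-1}J_h|_{\mathfrak{X}^u},$$
with norm at most $\frac{2}{1-\lambda}\cdot\frac{\lambda+1}{2} = \frac{\lambda+1}{1-\lambda} < \frac{2}{1-\lambda}$.

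Finally I would assemble these estimates in the $\|\cdot\|_1$ norm. For $\omega = u+v$ with $u \in \mathfrak{X}^c$ and $v \in \mathfrak{X}^{us}$, the block-diagonal form gives $P_h^{-1}\omega = -J_h u + (\id - J_h^{-1}F)^{-1}v$; summing the block bounds and using $2 \le 2/(1-\lambda)$ yields $\|P_h^{-1}\omega\|_1 \le \frac{2}{1-\lambda}\|\omega\|_1$. The only minor subtlety is comparing the $\mathfrak{X}^{us}$-norm of the second summand with the separate $\mathfrak{X}^s$ and $\mathfrak{X}^u$ component estimates, which is absorbed by the uniform lower bound on the angle between $E^s$ and $E^u$, i.e.\ the same geometric input encoded by the constant $L$ in \eqref{fdefL}.
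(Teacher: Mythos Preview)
Your approach is essentially the same as the paper's: block-diagonalize $P_h$ along $\mathfrak{X}^s\oplus\mathfrak{X}^c\oplus\mathfrak{X}^u$, invert the center block as $-J_h$, and use Neumann series on the hyperbolic blocks---your factorization on $\mathfrak{X}^u$ is exactly the closed form of the paper's series $-\sum_{k\ge 1}((F^u)^{-1}J_h^u)^k$, yielding the same bound $(1+\lambda)/(1-\lambda)$. The only difference is in the final assembly: the paper bounds $\|(P_h|_{\mathfrak{X}^{us}})^{-1}\|$ directly by the maximum of the two block norms (implicitly using orthogonality of $E^s$ and $E^u$ in the adapted metric), so no extra angle constant enters; your appeal to $L$ from \eqref{fdefL} is both unnecessary and slightly misdirected, since $L$ governs the $E^c$/$E^{us}$ angle rather than the $E^s$/$E^u$ one, and would spoil the exact constant $2/(1-\lambda)$ if actually used.
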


\begin{proof}
By the definitions of $F$ and $J_h$, we have
$F(\mathfrak{X}^i)=\mathfrak{X}^i$ and
$J_h(\mathfrak{X}^i)=\mathfrak{X}^i$ for $i=u,s,c$. Let
$F^{i}=F|_{\mathfrak{X}^i}$, $J_h^{i}=J_h|_{\mathfrak{X}^i}$ for
$i=u,s,c$. By the definition of $P_h$ we have
$P_h|_{\mathfrak{X}^i}=id_{\mathfrak{X}^i}-(J_h^{i})^{-1}\circ
F^{i},\;i=s,u$, and $P_h|_{\mathfrak{X}^c}=-(J_h^c)^{-1}$. So we
also have $P_h(\mathfrak{X}^i)=\mathfrak{X}^i$ for $i=s,c,u$.

Since $ \|F^s\|, \|(F^u)^{-1}\|\le \lambda$, by \eqref{f3LThmA} we
know that
$$\|(J_h^{s})^{-1}\circ F^{s}\|, \ \|((F^{(u)})^{-1}\circ J_h^{(u)})\|
\le \lambda\cdot (1+\lambda^{-1})/2 =(1+\lambda)/2 <1.
$$
Hence, both $P_h|_{\mathfrak{X}^s}$ and $P_h|_{\mathfrak{X}^u}$ are
invertible and
\begin{equation*}
\begin{split}
(P_h|_{\mathfrak{X}^s})^{-1}
&=(id_{\mathfrak{X}^s}-(J_h^{s})^{-1}\circ F^{s})^{-1}
=\sum_{k=0}^\infty((J_h^{s})^{-1}\circ F^{s})^k,      \\
(P_h|_{\mathfrak{X}^u})^{-1}
&=(id_{\mathfrak{X}^u}-(J_h^{u})^{-1}\circ
F^{u})^{-1}=-\sum_{k=1}^\infty((F^{u})^{-1}\circ J_h^{u})^k.
\end{split}
\end{equation*}
It follows that
\begin{eqnarray*}
\|(P_h|_{\mathfrak{X}^{us}})^{-1}\| \le
\max\left\{\|(P_h|_{\mathfrak{X}^s})^{-1}\|,
               \|(P_h|_{\mathfrak{X}^u})^{-1}\|\right\}
\le \frac{1}{1-(1+\lambda)/2} =   \frac{2}{1-\lambda}.
\end{eqnarray*}
By \eqref{f3LThmA} we also have
\begin{equation*}
\|(P_h|_{\mathfrak{X}^c})^{-1}\| \leq\|J_h\| \le 2.
\end{equation*}
So we obtain
$$
\|P_h^{-1}\|_1 \le \max\left\{\|(P_h|_{\mathfrak{X}^{us}})^{-1}\|,
               \|(P_h|_{\mathfrak{X}^c})^{-1}\|\right\}
\le \frac{2}{1-\lambda}.
$$
This is what we need.
\end{proof}

\begin{sublemma}\label{SL2ThmA}
For any $0<\varepsilon\le \rho$, there exists constant
$C(\varepsilon)>0$ such that for any $w, w'\in \mathfrak{B}(\ve)$,
\begin{equation*}\label{equation3 of lemma01}
\|\eta(w')-\eta(w)\|\leq C(\varepsilon)(\|w'-w\|).
\end{equation*}

Moreover, $C(\varepsilon)$ can be chosen in such a way that
$C(\varepsilon)\to 0$ as $\varepsilon\to 0$.
\end{sublemma}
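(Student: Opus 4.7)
The plan is to recognize $\eta$ as the first-order Taylor remainder of a smooth fiberwise map and bound it by the oscillation of the derivative near zero, which is small when $\varepsilon$ is small.

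First I would unpack the definition. Since $f(f^{-1}(x)) = x$ and $\exp_x^{-1}(x) = 0$, a direct computation gives $\beta(0)(x) = 0$, so from \eqref{fdefs} one may rewrite
$$\eta(w)(x) = \beta(w)(x) - \beta(0)(x) - (d_0\beta)(w)(x).$$
For each $x \in M$, I would introduce the fiberwise map
$$\beta_x(v) := \exp_x^{-1} \circ f \circ \exp_{f^{-1}(x)}(v), \qquad v \in T_{f^{-1}(x)}M,\ \|v\| \le \rho,$$
so that $\beta(w)(x) = \beta_x(w(f^{-1}(x)))$ and $d_0\beta_x = d_{f^{-1}(x)}f$. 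Since $f \in \Diff^r(M)$ with $r \ge 1$ and $\exp$ is smooth, the map $(x,v) \mapsto d_v\beta_x$ is jointly continuous on the compact set $\{(x,v) : x \in M,\ \|v\| \le \rho\}$.

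Next I would apply the mean value inequality to $\beta_x$ along the segment from $w(f^{-1}(x))$ to $w'(f^{-1}(x))$, both of which lie in the $\varepsilon$-ball of $T_{f^{-1}(x)}M$ since $w, w' \in \mathfrak{B}(\varepsilon)$. Setting
$$C(\varepsilon) := \sup_{x \in M}\ \sup_{\|v\| \le \varepsilon} \|d_v\beta_x - d_0\beta_x\|,$$
this yields the pointwise estimate $\|\eta(w')(x) - \eta(w)(x)\| \le C(\varepsilon)\,\|w'(f^{-1}(x)) - w(f^{-1}(x))\|$. Taking the supremum over $x \in M$ gives the required Lipschitz bound.

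Finally, $C(\varepsilon) \to 0$ as $\varepsilon \to 0$ follows from the uniform continuity of $(x,v) \mapsto d_v\beta_x$ on the compact parameter set, together with the fact that the integrand vanishes at $v = 0$. The only point requiring care is the uniformity in $x$, but this is immediate from compactness of $M$ and the $C^1$-smoothness of the fiberwise family $\{\beta_x\}_{x \in M}$; I do not anticipate any serious obstacle.
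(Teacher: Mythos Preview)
Your proposal is correct and follows essentially the same approach as the paper: both introduce the fiberwise maps $\beta_x$, express $\eta(w')-\eta(w)$ as a first-order remainder, and bound it by $\sup_{\|v\|\le\varepsilon}\|d_v\beta_x-d_0\beta_x\|$ via the mean value inequality (the paper writes out the integral form explicitly), with $C(\varepsilon)\to 0$ following from uniform continuity of $(x,v)\mapsto d_v\beta_x$ on the compact parameter set.
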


\begin{proof}
Let $\beta_x:T_{f^{-1}(x)}M\longrightarrow T_x M$ be the map defined
by $\beta_x(\xi)=\exp_x^{-1}\circ f\circ
        \exp_{f^{-1}(x)}\xi$. Therefore $\beta(w)(f^{-1}(x))=\beta_x(w(f^{-1}(x)))$ for any $w\in \mathfrak{X}$.  Then by
\eqref{fdefs}, for any $w, w'\in \mathfrak{X}$ with
$\|w\|,\|w'\|<\varepsilon$,
\begin{equation*}
\begin{split}
&\|\eta(w')(x)-\eta(w)(x)\| \\
=&\|\beta_x(w'(f^{-1}(x)))-\beta_x(w(f^{-1}(x)))-d_0\beta_x(w'(f^{-1}(x))-w(f^{-1}(x)))\|  \\
=&\Bigl\|\int_0^1 d_{w(f^{-1}(x))+t(w'(f^{-1}(x))-w(f^{-1}(x)))}\beta_x(w'(f^{-1}(x))-w(f^{-1}(x)))dt\\
&-d_0\beta_x(w'(f^{-1}(x))-w(f^{-1}(x)))\Bigr\|\\
\leq& \sup_{w^*\in
I_x}\|d_{w^*}\beta_x-d_0\beta_x\|\cdot\|w'(f^{-1}(x))-w(f^{-1}(x))\|,
\end{split}
\end{equation*}
where $I_x=\{w(f^{-1}(x))+t(w'(f^{-1}(x))-w(f^{-1}(x))):t\in
[0,1]\}$. Since $d_{w^*} \beta_x$ is continuous with $w^*$ and the
continuity is uniform with respect to $x$, we can take
$$
C(\varepsilon) =\sup\bigl\{\|d_{w(f^{-1}(x))}\beta_x-d_0\beta_x\|: \
    w \in \mathfrak{B}_1(\varepsilon), x\in M\bigr\}.
$$
Now the results of the lemma are clear.
\end{proof}

\begin{sublemma}\label{SL3ThmA}
For any $h$ with $d(h,\id_M)\le \rho$, there exists a constant
$K=K(h)>0$ such that for any $w, w'\in \mathfrak{B}(\ve)$,
\begin{equation*}\label{equation3 of lemma02}
\|\theta_h(w')-\theta_h(w)\|\leq K(h)\|w'-w\|.
\end{equation*}

Moreover, $K(h)$ can be chosen in such a way that $K(h)\to 0$ as
$d(h,\id_M)\to 0$.
\end{sublemma}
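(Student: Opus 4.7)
The plan is to mirror the mean-value argument of Sublemma~\ref{SL2ThmA}, but I first split $\theta_h$ into two pieces that have to be controlled by different mechanisms. Writing $\gamma_{h,x}=\exp_x^{-1}\circ\exp_{h(x)}$ for the transition map, set
\[
\theta_h^{(1)}(w)(x)=\gamma_{h,x}(w(h(x)))-d_0\gamma_{h,x}\,w(h(x)),
\]
\[
\theta_h^{(2)}(w)(x)=\sum_{\substack{i,j\in\{s,c,u\}\\ i\ne j}}\Pi_x^i\,d_0\gamma_{h,x}\,\Pi_{h(x)}^j\,w(h(x)),
\]
so that $\theta_h=\theta_h^{(1)}+\theta_h^{(2)}$. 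The first summand is the nonlinear Taylor remainder of $\gamma_{h,x}$ at the origin, while the second is the ``off-diagonal'' part of the linearization relative to the splitting $E^s\oplus E^c\oplus E^u$. The key observation is that \emph{both} pieces vanish identically when $h=\id_M$: then $\gamma_{\id,x}$ is the identity on its domain, so its Taylor remainder is $0$ and $d_0\gamma_{\id,x}=\id_{T_xM}$, which together with $\Pi_x^i\Pi_x^j=0$ for $i\ne j$ kills every off-diagonal term. The strategy is to exploit this vanishing to obtain Lipschitz constants that go to $0$ as $d(h,\id_M)\to 0$.

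For $\theta_h^{(1)}$ I would apply exactly the fundamental-theorem-of-calculus computation used in Sublemma~\ref{SL2ThmA}, which gives
\[
\|\theta_h^{(1)}(w')(x)-\theta_h^{(1)}(w)(x)\|\le \sup_{\xi\in I_{h,x}}\|d_\xi\gamma_{h,x}-d_0\gamma_{h,x}\|\cdot\|w'(h(x))-w(h(x))\|,
\]
where $I_{h,x}$ is the segment from $w(h(x))$ to $w'(h(x))$. Since $\exp$ is smooth and $M$ is compact, $(x,y)\mapsto\exp_x^{-1}\circ\exp_y$ is jointly $C^2$ in a uniform neighborhood of the diagonal and reduces to the identity there; hence the supremum
\[
K_1(h):=\sup\bigl\{\|d_\xi\gamma_{h,x}-d_0\gamma_{h,x}\|:x\in M,\ \|\xi\|\le \ve\bigr\}
\]
tends to $0$ as $d(h,\id_M)\to 0$.

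For $\theta_h^{(2)}$, which is linear in $w$, the Lipschitz constant equals the operator norm
\[
K_2(h):=\sup_{x\in M}\Bigl\|\sum_{i\ne j}\Pi_x^i\,d_0\gamma_{h,x}\,\Pi_{h(x)}^j\Bigr\|.
\]
Continuity of the projections $y\mapsto\Pi_y^i$, continuity of $(x,y)\mapsto d_0(\exp_x^{-1}\circ\exp_y)$, the vanishing at $h=\id_M$, and compactness of $M$ together give $K_2(h)\to 0$ as $d(h,\id_M)\to 0$. Setting $K(h)=K_1(h)+K_2(h)$ and using $\|w'(h(\cdot))-w(h(\cdot))\|\le\|w'-w\|$ (valid because $h$ is a bijection of $M$) produces the required inequality. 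The step I expect to be most delicate is controlling $K_2(h)$: unlike the nonlinear remainder, whose smallness is an automatic $C^2$-Taylor fact, the vanishing here genuinely depends on the cancellation $\Pi_x^i\Pi_x^j=0$, so proving $K_2(h)\to 0$ requires uniform continuity of the invariant bundles at the \emph{displaced} base points $h(x)$, which is why compactness of $M$ must be invoked explicitly.
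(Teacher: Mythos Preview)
Your proof is correct and follows essentially the same route as the paper's. The paper's own proof is very terse: it observes that $\theta_h(w)(x)$ depends smoothly on the pair $(h(x),w(h(x)))$, applies the mean-value argument of Sublemma~\ref{SL2ThmA} verbatim, and then notes that the partial derivative of $\theta$ with respect to $w$ vanishes identically when $h=\id_M$, whence $K(h)\to 0$. Your decomposition $\theta_h=\theta_h^{(1)}+\theta_h^{(2)}$ simply makes that last sentence explicit---the vanishing of the Taylor remainder and of the off-diagonal block are precisely the two reasons the derivative is zero at $h=\id_M$---so the two arguments coincide in substance.
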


\begin{proof}
Since the map $\theta$ is $C^\infty$ with respect to $w$ and $h$, we
can use the same method in the proof of the previous lemma to get
the inequality.

Note that if $h=\id_M$, then the partial derivative of $\theta$ with
respect to $w$ is zero.  So we get $K(h)\to 0$ as $d(h,\id_M)\to 0$.
\end{proof}

\subsection{The center foliation $\mathcal{W}^c_f$ is $C^1$}

\subsubsection{The general case}

\begin{proof}[Proof of Theorem B]

The proof is similar to that of Theorem A.

To find $\pi$ satisfying (\ref{feqnThmB}) and the
conditions in (\ref{cond1}) of this theorem, we shall try to solve the equation
\begin{equation}\label{feqn2ThmB}
\pi\circ g(x)=\tau^{(2)}_{g(x)}\circ f\circ\pi(x)
\end{equation}
for unknown $\pi$. Putting $\pi(x)=\exp_x(v(x))$ with $v\in \mathfrak{B}^{us}(\rho)$, we
see that (\ref{feqn2ThmB}) is equivalent to
\begin{equation}\label{feqn3ThmB}
v(x)=\exp_x^{-1}\circ\tau^{(2)}_x\circ f\circ
\pi(v({g^{-1}(x)})).
\end{equation}

Define an operator $\beta: \mathfrak{B}^{us}(\rho)\to \mathfrak{X}^{us}$ and a linear operator $A: \mathfrak{B}^{us}(\rho)\to \mathfrak{X}^{us}$ by
\begin{equation}\label{betaThmB}
(\beta(v))(x)=\exp_x^{-1}\circ\tau^{(2)}_x\circ f\circ
\pi(v({g^{-1}(x)})),
\end{equation}
\begin{equation}\label{AThmB}
(Av)(x)
=(A_{g^{-1}(x)}^s+A_{g^{-1}(x)}^u)v(g^{-1}(x)),
\end{equation}
where
$$
A_{g^{-1}(x)}^s=\Pi_x^s\circ
d_0(\exp_x^{-1}\circ\tau^{(2)}_x\circ f\circ
\exp_{g^{-1}(x)})\circ \Pi_{g^{-1}(x)}^s
$$
and
$$
A_{g^{-1}(x)}^u=\Pi_x^u\circ
d_0(\exp_x^{-1}\circ\tau^{(2)}_x\circ f\circ
\exp_{g^{-1}(x)})\circ \Pi_{g^{-1}(x)}^u.
$$
Let $\eta=\beta-A$.  By (\ref{betaThmB}) and (\ref{AThmB}),
(\ref{feqn3ThmB}) is equivalent to
$$
v=Av+\eta(v),
$$
further, is equivalent to
$$
v-Av=\eta(v).
$$
Define a linear operator $P$ from a neighborhood of $0\in
\mathfrak{X}^{us}$ to $\mathfrak{X}^{us}$ by
\begin{equation}\label{P}
Pv=(\id_{\mathfrak{X}^{us}}-A)v
\end{equation}
for $v\in \mathfrak{X}^{us}$.

Define an operator $\Phi$ from a neighborhood of $0\in
\mathfrak{X}^{us}$ to $\mathfrak{X}^{us}$ by
$$
\Phi(v)=P^{-1}\eta(v).
$$
Hence, the equation (\ref{feqn2ThmB}) is equivalent to
\begin{equation}\label{feqn4ThmB}
\Phi(v)=v,
\end{equation}
namely, $v$ is a fixed point of $\Phi$.

The remaining work is to show that for any $\ve\in
(0,\varepsilon_0)$ there exists $\delta=\delta(\ve)$ such that for any homeomorphism
$g$ with $d(g,f)\le \delta$, $\Phi: \mathfrak{B}(\ve)\to
\mathfrak{B}(\ve)$ is a contracting map, and therefore has a fixed
point in $\mathfrak{B}(\ve)$. Hence, (\ref{feqn2ThmB}) has a
unique solution. To this end we only need to modify the proof of Lemma~\ref{LThmA} to a easer version since in this case we need not to concern with the center direction. We leave the details to the reader.
\end{proof}

\subsubsection{$\mathcal{W}^c_f$ is of one dimensional}

\begin{proof}[Proof of Theorem B$'$]
The proof is also similar to that of Theorem A.

To find $\pi$ and $\tilde{\tau}$ satisfying (\ref{feqnThmB'}) and the
conditions in (\ref{cond1}) of this theorem, we shall
try to solve the equation
\begin{equation}\label{feqn2ThmB'}
\pi\circ g(x)=\tau^{(3)}_{f(x)}\circ f\circ\pi(x)
\end{equation}
for unknown $\tilde{\tau}$ and $\pi$. Putting $h=g\circ f^{-1}$ and
$\pi(x)=\exp_x(v(x))$ with $v\in \mathfrak{X}^{us}$, we see that
(\ref{feqn2ThmB'}) is equivalent to
\begin{equation}\label{feqn3ThmB'}
\exp_x^{-1}\circ
\exp_{h(x)}(v(h(x)))=\exp_x^{-1}\circ\varphi^{\tilde{\tau}(x)}\circ
f\circ \exp_{f^{-1}(x)}\bigl(v(f^{-1}(x))\bigr).
\end{equation}

Define $\beta: \mathfrak{B}(\rho)\times \mathfrak{C}(\rho)\to
\mathfrak{X}$, where $\mathfrak{C}(\rho)=\{\tilde{\tau}\in C^0(M):
\|\tilde{\tau}\|\le \rho\}$, by
\begin{eqnarray}\label{fdefbetaB'}
\beta(\omega,\tilde{\tau})(x)=\exp_x^{-1}\circ
\varphi^{\tilde{\tau}(x)} \circ f
  \circ \exp_{f^{-1}(x)}\bigl(\omega(f^{-1}(x))\bigr).
\end{eqnarray}
It is easy to see that
$$
\bigl(d_{(0,0)}\beta(\omega,\tilde{\tau})\bigr)(x)=(F\omega)(x)+\tilde{\tau}(x)\cdot
u(x),
$$
where $F: \mathfrak{X}\to \mathfrak{X}$ is defined in \eqref{fdefF}
(also recall that in this case $u$ is a unit center vector field).
Let
\begin{equation}\label{fdefsB'}
\eta(\omega,\tilde{\tau})(x)=\beta\bigl(\omega,\tilde{\tau}\bigr)(x)
    - \bigl(d_{(0,0)}\beta(\omega,\tilde{\tau})\bigr)(x).
\end{equation}
Then we can write
\begin{equation}\label{feqn3rightB'}
\exp_x^{-1}\circ\varphi^{\tilde{\tau}(x)}\circ f\circ
\exp_{f^{-1}(x)}(v(f^{-1}(x))) =(Fv)(x)+\tilde{\tau}(x)\cdot
u(x)+\eta(v,\tilde{\tau})(x).
\end{equation}

By \eqref{feqn3left} and \eqref{feqn3rightB'},  \eqref{feqn3ThmB'} is
equivalent to
$$
J_hv+\theta_h(v)=Fv+\tilde{\tau}\cdot u+\eta(v,\tilde{\tau}).
$$
where $J_h$ is a linear operator defined in \eqref{fdefJh}.
Further, the equation is equivalent to
$$
-J_h^{-1}(\tilde{\tau}\cdot
u)+(\id_{\mathfrak{X}}-J_h^{-1}F)v=J_h^{-1}(\eta(v,\tilde{\tau})-\theta_h(v)).
$$
Similarly we define a linear map $P_h$ by
\begin{equation}\label{fdefPhB'}
P_h\omega=-J_h^{-1}(\tilde{\tau}\cdot
u)+(\id_{\mathfrak{X}}-J_h^{-1}F)v
\end{equation}
for $\omega=\tilde{\tau}\cdot u+v\in \mathfrak{X}$ with $v\in
\mathfrak{B}^{us}(\rho)$ and $\tilde{\tau}\in \mathfrak{C}(\rho)$.
Hence, the above equation becomes
$$
P_h(\tilde{\tau}\cdot u+v)=J_h^{-1}(\eta(v,\tilde{\tau})-\theta_h(v)).
$$

Define a map $\Phi_h$ from a neighborhood of
$0\in \mathfrak{X}$ to $\mathfrak{X}$ by
$$
\Phi_h(\tilde{\tau}\cdot
u+v)=P_h^{-1}J_h^{-1}(\eta(v,\tilde{\tau})-\theta_h(v)).
$$
Hence, the equation (\ref{feqn2ThmB'}) is equivalent to
\begin{equation}\label{feqn4ThmB'}
\Phi_h(\tilde{\tau}\cdot u+v)=\tilde{\tau}\cdot u+v,
\end{equation}
namely, $\tilde{\tau}\cdot u+v$ is a fixed point of $\Phi_h$.

Also similar to what we have done in the proof of Theorem A, there exists
$\ve_0\in (0,\rho)$ such that for any $\ve\in
(0,\varepsilon_0)$ there exists $\delta=\delta(\ve)$ such that for any
homeomorphism $h$ with $d(h,\id_M)\le \delta$, $\Phi_h:
\mathfrak{B}_1(\ve)\to \mathfrak{B}_1(\ve)$ is a contracting map,
and therefore has a fixed point in $\mathfrak{B}_1(\ve)$. Hence,
(\ref{feqn2ThmB'}) has a unique solution. We leave the details to
the reader.
\end{proof}

\section{Structural quasi-stability}\label{SStructqs}

\begin{proof}[Proof of Theorem C]
We only prove this theorem under the assumption of Theorem B. We shall find $\pi$ and $u$ using the similar strategy in the proof of Theorem B. Furthermore, in order to
obtain a leaf conjugacy $\pi$ we shall give some necessary modification in techniques.

Since the center foliation of $f$ is $C^1$, hence from Theorem 5.10
of \cite{Pesin} and Section 6 of \cite{Hirsch}, we know that if
a diffeomorphism $g$ is sufficiently close to $f$ in $C^1$ topology,
then it is also partially hyperbolic, the corresponding splitting
$E_g^s\oplus E_g^c\oplus E_g^u$ is near that of $f$ and the center distribution
$E_g^c$ is integrable. Now choose $C^1$ bundle $\tilde{E}^s\oplus
\tilde{E}^u$ sufficiently close to $E_g^s\oplus E_g^u$, and hence
close to $E_f^s\oplus E_f^u$. We want to find a continuous center
section $u\in \mathfrak{X}^c $ and a homeomorphism
$\pi:M\longrightarrow M$ $\varepsilon$ close to $\id_M$ such that
(\ref{feqnThmB}) holds and
\begin{equation}\label{feqn1ThmC}
\exp_x^{-1}(\pi(x))\in \tilde{E}_x^s\oplus \tilde{E}_x^u
\end{equation}
for $x\in M$. Put $h=g\circ f^{-1}$ and $\pi(x)=\exp_x(v(x))$
for $v\in \tilde{\mathfrak{X}}^s\oplus\tilde{\mathfrak{X}}^u$, where $\tilde{\mathfrak{X}}^s$ and $\tilde{\mathfrak{X}}^u$  denote the spaces of continuous sections of $\tilde{E}^s$ and $\tilde{E}^u$ respectively. We
see that (\ref{feqnThmB}) is equivalent to
\begin{equation}\label{feqn2ThmC}
\exp_x^{-1}\circ
\exp_{h(x)}(v(h(x)))=\exp_x^{-1}\circ\tau^{(2)}_x\circ f\circ
\exp_{f^{-1}(x)}\bigl(v(f^{-1}(x))\bigr).
\end{equation}
Then we can write
\begin{equation}\label{feqn3ThmC}
\exp_x^{-1}\circ\tau^{(2)}_x\circ f\circ
\exp_{f^{-1}(x)}(v(f^{-1}(x)))
=(Fv)(x)+u(x)+\eta(w)(x),
\end{equation}
where $\omega=u+v\in \mathfrak{X}$ with
$u\in \mathfrak{X}^c$ and $v\in \tilde{\mathfrak{X}}^{us}$,
$$
(Fv)(x)=\sum_{i=s,u}\tilde{\Pi}_x^i\circ
d_{f^{-1}(x)}f\circ\tilde{\Pi}_{f^{-1}(x)}^iv(f^{-1}(x))
$$
and
\begin{eqnarray}
\eta(w)(x)&=&\exp_x^{-1}\circ\tau^{(2)}_x\circ f\circ
\exp_{f^{-1}(x)}\bigl(v(f^{-1}(x))\bigr)\notag\\
&&-d_0\bigl(\exp_x^{-1}\circ\tau^{(2)}_x\circ f\circ
\exp_{f^{-1}(x)}\bigr)v(f^{-1}(x))\notag\\
&&+\sum_{i=s,c,u,j=s,u,i\neq j}\tilde{\Pi}_x^i\circ
d_{f^{-1}(x)}f\circ\tilde{\Pi}_{f^{-1}(x)}^jv(f^{-1}(x)),
\end{eqnarray}
in which $\tilde{\Pi}_x^s$ is the projection from $T_xM$ onto
$\tilde{E}^s_x$ along $\tilde{E}^c_x\oplus \tilde{E}^u_x$,
where $\tilde{\Pi}^{c}_x={\Pi}^{c}_x$.
$\tilde{\Pi}^{c}_x$ and $\tilde{\Pi}^{u}_x$ are
defined in the similar manner. It is clear that
$F(\tilde{\mathfrak{X}}^s)=\tilde{\mathfrak{X}}^s,
 F(\tilde{\mathfrak{X}}^u)=\tilde{\mathfrak{X}}^u$.
Moreover, for any $\lambda<\tilde{\lambda}<1$, we can choose $g$ sufficiently close to $f$, $\tilde{E}^s\oplus \tilde{E}^u$
sufficiently close to $E_f^s\oplus E_f^u$ such that
$$
\|F|_{{\mathfrak{X}}^s}\|,\;\;\|F^{-1}|_{{\mathfrak{X}}^u}\|^{-1}\leq \tilde{\lambda}.
$$
Similar to \eqref{feqn3left}, we have
\begin{equation}\label{feqn4ThmC}
\exp_x^{-1}\circ\exp_{h(x)}(v(h(x)))=(J_hv+\theta_h(v))(x),
\end{equation}
where $J_h$ and $\theta_h$ is redefined with respect to $\tilde{\mathfrak{X}}=\tilde{\mathfrak{X}}^s\oplus \tilde{\mathfrak{X}}^c\oplus \tilde{\mathfrak{X}}^u$.
By \eqref{feqn3ThmC} and \eqref{feqn4ThmC},  \eqref{feqn2ThmC} is equivalent to
$$
J_hv+\theta_h(v)=Fv+u+\eta(w).
$$
From now, we can find $\pi$ and $u$ in a similar way as we have done
in the proof of Theorem B. We omit the details.

In the following, we prove that $\pi$ obtained above is a leaf conjugacy
from $(g, \mathcal{W}_g^c)$ to $(f, \mathcal{W}_f^c)$. Since the
bundle $\tilde{E}^s\oplus \tilde{E}^u$ is $C^1$, (\ref{feqn1ThmC})
implies that the restriction of $\pi$ to each center leaf of $g$ is one-to-one. If we can get that $\pi$ sends center leaves of $g$ to
that of $f$ then by the same arguments of Pesin in Lemma~5.11
of \cite{Pesin} we can conclude that $\pi$ is a leaf conjugacy.
Therefore, the remaining work is to prove that $\pi$ sends center leaves
of $g$ to that of $f$.

Now we show that for any $x\in M$, $\pi(W^c_g(x))\subset W^c_f(\pi x)$.
It is enough to show that the set $\pi(W^c_g(x))$ is tangent to $E^c_f(\pi x)$
for any $x\in M$.  Suppose not, then there exist a small number $c_1>0$
and a sequence of points $\{y_k\}\subset W^c_g(x)$ with $y_k\to x$
as $k\to \infty$ such that
\begin{equation}\label{fleafconj1}
d(\pi z_k', \pi z_k)+d(\pi z_k, \pi y_k)\ge c_1 d(\pi x,\pi y_k),
\end{equation}
where $z_k'$ and $z_k$ are the unique points such that 
$\pi z_k\in W^u_f(\pi y_k)$
and $\pi z_k'\in W^s_f(\pi z_k)\cap W^c_f(\pi x)$.
By taking a subsequence we may assume that
$d(\pi z_k, \pi y_k)\ge d(\pi z_k', \pi z_k)$ for all $k>0$,
and the other case can be discussed similarly by using $f^{-1}$.

For each $k>0$, there exists $n=n(k)>0$ such that
\begin{equation*}
d(g^i(x), g^i(y_k))\le (\mu'_g)^i d(x,y_k)\le \varepsilon
\qquad \forall \; 0\le i\le n,
\end{equation*}
where $\mu'_g$ is the upper bounds of $\|Dg|_{E^c_g}\|$ given
in the definition of partially hyperbolic diffeomorphism.
We can see that $n(k)\sim -\log d(x,y_k)/\log \mu'_g$ if $\mu'_g>1$, 
and we regard $n(k)=\infty$ if otherwise.
Since $d(\pi, \id_M)<\varepsilon$, we have
\begin{equation}\label{fleafconj2}
d\bigl(\pi(g^i x), \pi(g^i y_k)\bigr)\le 3\ve
\qquad \forall \; 0\le i\le n.
\end{equation}

Since the foliation $\mathcal{W}_f^c$ is smooth, for
any $x\in M$ there is a coordinate chart $U_x$ at $x$ of size $r>0$
such that the local leaves of the center foliation can be viewed as
parallel disks. For any $z\in U_x$, denote such a local center
disk passing through $z$ by $B_f^c(z)$.
We consider the coordinate charts $U_{\pi(g^ix)}$ at $\pi(g^i x)$
of size $r$, $i=0, 1, \cdots, n$.
We assume that $r>0$ is small and the coordinates are taken in a way
such that the metrics on the charts are close to the metric on the manifold.
Also, we assume that $\ve$ and $\delta$ are small such that
as $d(f,g)<\delta$, all the points $\pi(g^i w)$ and $f(\pi(g^{i-1} w))$
are in the chart~$U_{\pi(g^ix)}$, where $w=x, y_k, z_k, z_k'$.
Since $\tau$ is a motion along leaves, we have
$B_f^c(\pi(gx))=B_f^c(f(\pi x))$. So by the fact that
$d(f(\pi z_k), f(\pi y_k))\ge \mu_f d(\pi z_k, \pi y_k)$,
we get
$$
d(B_f^c(\pi(g z_k)), B_f^c(\pi(g y_k)))
\geq \tilde{\mu}d(B_f^c(\pi z_k), B_f^c(\pi y_k) )
$$
for some $\max\{1, \mu_g'\}< \tilde{\mu} < \mu_f$.
Inductively, we have
$$
d\bigl(B_f^c(\pi(g^n z_k)), B_f^c(\pi(g^n y_k)\bigr)
\geq \tilde{\mu}^n d\bigl(B_f^c(\pi z_k), B_f^c(\pi y_k)\bigr).
$$
Since $\pi z_k'\in W^s_f(\pi z_k)$ and
$B_f^c(\pi(g^n z_k'))=B_f^c(\pi(g^n x)$, we have
$$
d\bigl(B_f^c(\pi(g^i z_k)), B_f^c(\pi(g^i x))\bigr) \to 0 
\mbox{ as }  i\to \infty.
$$
Hence
$$
d\bigl(B_f^c(\pi(g^n x)), B_f^c(\pi(g^n y_k))\bigr)
\geq \tilde{\mu}^n d\bigl(B_f^c(\pi z_k), B_f^c(\pi y_k )\bigr).
$$
Since $\pi z_k\in W^u_f(\pi y_k)$, it is easy to see that
$d\bigl(B_f^c(\pi z_k), B_f^c(\pi y_k )\bigr)\ge c_2 d(\pi z_k, \pi y_k)$
for some constant $c_2>0$ only depends on the system.
Also by \eqref{fleafconj1} and the fact that
$d(\pi z_k, \pi y_k)\ge  d(\pi z_k', \pi z_k)$, we have
$d(\pi z_k, \pi y_k)\ge 0.5 c_1 d(\pi x, \pi y_k)$.
Since $y_k\in W_g^c(x)$ and the map $\pi(x)$ is along
$\tilde E^u_x\oplus \tilde E^s_x$, which is a smooth tangent subbundle,
we have $d(\pi x, \pi y_k) \ge c_3 d(x,y_k)$ for some $c_3>0$ independent
of $x$ and $y_k$.  Therefore, we have
$$
d\bigl(B_f^c(\pi(g^n x)), B_f^c(\pi(g^n y_k))\bigr)
\geq C \tilde{\mu}^n d(x, y_k),
$$
where $C$ is a constant independent of $x$, $y_k$ and $n$.
Since $n=n(k)$ increases like $-\log d(x,y_k)/\log \mu'_g$ and
$\min\{1,\mu'_g\}<\tilde{\mu}$, we have $\tilde{\mu}^n d(x, y_k)\to\infty$
as $d(x,y_k)\to 0$.  This contrdicts to \eqref{fleafconj2}
which implies that $d\bigl(B_f^c(\pi(g^n x)), B_f^c(\pi(g^n y_k))\bigr)$
is bounded.

Replacing the center leaves by the center-stable leaves or the
center-unstable leaves and using similar arguments in the above
paragraph, we can prove that $\pi$ sends $\mathcal{W}_g^{cs}$
and $\mathcal{W}_g^{cu}$ to $\mathcal{W}_f^{cs}$ and $\mathcal{W}_f^{cu}$ 
respectively.
\end{proof}

\section{Applications in the entropy theory}\label{SApp}

In this section, we apply our results to continuity of entropy. It
is well known that continuity properties of entropy are very
delicate. Obviously, the topological entropy of Anosov
diffeomorphisms is locally a constant since it is structurally
stable. For partially hyperbolic systems,
Hua, Saghin and Xia (\cite{Xia}) proved that for the case
that the unstable and stable foliations stably carry some
unique nontrivial homologies,
the topological entropy is locally constant if the center foliation is one
dimensional, and continuous if the center foliation is two
dimensional.
Hua (\cite{Hua}) showed that the topological entropy is continuous
at the time one map of transitive Anosov flows.
In this section, we will use our results on the structural quasi-stability
to investigate continuity of entropy for some
partially hyperbolic diffeomorphisms.

\subsection{Time one map of Anosov flow}

\begin{proof}[Proof of Theorem D]
Let $g$ be a diffeomorphism sufficiently close to $f$. By Theorem
B$'$ and Theorem~C, there exist a homeomorphism $\pi: M\rightarrow M$
with $d(\pi,\id_M)$ sufficiently small and a small
$\tilde{\tau}: M\rightarrow \mathbb{R}$ such that
$$
\pi\circ g(x)=\varphi^{\tilde{\tau}\circ f(x)}(f\circ\pi(x)) \ \ \forall
x\in M.
$$
Now we can define a flow $\psi$ by
$\psi^t(x)=\pi^{-1}\varphi^t(\pi(x))$ for $x\in M$ and
$t\in\mathbb{R}$. Obviously, $\varphi$ and $\psi$ are conjugate and
\begin{equation}\label{feqn1Thm5}
g(x)=\psi^{1+\tilde{\tau}\circ f(x)}(x)
\end {equation}
for any $x\in M$. By Theorem B of \cite{Thomas}, we have that
\begin{equation}\label{feqn2Thm5}
(1+\min_{x\in M}\tilde{\tau}(x))h(\varphi)\leq h(\psi)\leq(1+\max_{x\in
M}\tilde{\tau}(x))h(\varphi),
\end{equation}
where $h(\varphi)$ and $h(\psi)$ are the topological entropies of
$\varphi$ and $\psi$ respectively. From Proposition 21 of
\cite{Bowen0}, we have that for any $t\in\mathbb{R}$,
\begin{equation}\label{feqn3Thm5}
h(\varphi^t)=|t|h(\varphi^1)=|t|h(f) \;\;\;\text{and}\;\;\;
h(\psi^t)=|t|h(\psi^1).
\end{equation}
By (\ref{feqn1Thm5}),
\begin{equation}\label{feqn4Thm5}
h(\psi^{1+\min\limits_{{x\in M}}\tilde{\tau}(x)})\leq h(g)\leq
h(\psi^{1+\max\limits_{x\in M}\tilde{\tau}(x)}).
\end{equation}
Therefore, by (\ref{feqn2Thm5}), (\ref{feqn3Thm5}) and (\ref{feqn4Thm5}),
we have
$$
(1+\min_{x\in M}\tilde{\tau}(x))^2h(f)\leq h(g)\leq (1+\max_{x\in
M}\tilde{\tau}(x))^2h(f).
$$
Note that $|\tilde{\tau}|\rightarrow 0$ as $g\rightarrow f$. Hence we
conclude that the topological entropy function is continuous at $f$.
\end{proof}

\subsection{Systems with almost parallel center foliation}

For a smooth surface $\S$, $y\in \S$ and $r>0$, we
denote
$$
\S(y,\a)=\{z\in \S: d(z,y)< r\}.
$$

The volume growth rate of the unstable foliation of $f$ is defined by
$$
\chi^u(x, r)=\limsup_{n\to\infty}\frac{1}{n}\log
\Vol(f^n\mathcal{W}_f^u(x,r)),
$$
and
$$
\chi^u(f)=\sup_{x\in M}\chi^u(x, r).
$$
(See \cite{Xia}.) Clearly, $\chi^u(f)$ is independent of $r$.
$\chi^s(f)$ is defined similarly by using stable manifolds
$\mathcal{W}^s_f$.

\begin{proof}[Proof of Theorem E]
The first part of the theorem follows from Lemma~\ref{L1ThmE} below. By
Lemma~\ref{L2ThmE} below, the volume growth satisfies $\chi^u(f)=\chi^u(g)$.
So, following the same arguments in the proof of Theorem 1.1 in \cite{Xia},
we can obtain the second part of the theorem.
\end{proof}

Recall that $\t^c$ is a holonomy map defined by sliding the center leaves.
When we use the map, we will allow the domain to be a nonsmooth surface
or even an arbitrary set.
Also, we use $\t^c_f$ and $\t^c_g$ to denote the maps along the center leaves
of $f$ and $g$ respectively.

\begin{lemma}\label{L1ThmE}
Let $f$ be a partially hyperbolic diffeomorphism as in Theorem B
and $g$ be a diffeomorphism that is $C^1$-close to $f$. If $f$ has
almost parallel center foliation, then so does $g$.

Moreover, for any $x, y\in M$ with $y\in \mathcal{W}_f^c(\pi(x))$, and
any smooth surfaces $\S_f(x)$ and $\S_g(y)$ with
$\S_f(x)\perp \mathcal{W}^c_f$ and $\S_g(y)\perp \mathcal{W}^c_g$,
the map $\t^c_g\circ \pi: \S_f(x)\to \S_g(y)$ is uniformly
continuous with respect to $x$ and $y$.
\end{lemma}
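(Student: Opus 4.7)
The plan is to derive both conclusions from the leaf conjugacy provided by Theorem~C together with the almost parallel property of $f$. By Theorem~C, for $g$ sufficiently $C^1$-close to $f$, the map $\pi$ is a homeomorphism of $M$ that is $\varepsilon$-close to the identity and carries every $\mathcal{W}^c_g$-leaf onto a $\mathcal{W}^c_f$-leaf; consequently both $\pi$ and $\pi^{-1}$ are uniformly continuous on the compact manifold $M$. Since the splittings of $g$ are $C^0$-close to those of $f$, any smooth surface perpendicular to $\mathcal{W}^c_g$ makes a uniformly positive angle with $\mathcal{W}^c_f$, and conversely. The leaf conjugacy furnishes the basic intertwining identity
\begin{equation*}
\pi(\t^c_g(z)) \;=\; \t^c_f(\pi(z)),
\end{equation*}
where on the right the $f$-center holonomy is taken between the topological transversals $\pi(\Sigma_1)$ and $\pi(\Sigma_2)$.

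To prove that $g$ has almost parallel center foliation, given $\alpha>0$ I first select $\alpha_1>0$ by uniform continuity of $\pi^{-1}$ so that $d(p,q)\le\alpha_1$ implies $d(\pi^{-1}p,\pi^{-1}q)\le\alpha$; apply the almost parallel property of $f$ at level $\alpha_1$ to produce $\beta_1>0$; and use uniform continuity of $\pi$ to pick $\beta>0$ with $d(z_1,z_2)\le\beta$ implying $d(\pi z_1,\pi z_2)\le\beta_1$. Then for any smooth $\Sigma_1,\Sigma_2\perp\mathcal{W}^c_g$ and $z_1,z_2\in\Sigma_1$ with $d(z_1,z_2)\le\beta$, the points $\pi(z_i)\in\pi(\Sigma_1)$ are within $\beta_1$; almost parallelism of $f$ bounds $d(\t^c_f(\pi z_1),\t^c_f(\pi z_2))$ by $\alpha_1$, and then the intertwining identity together with uniform continuity of $\pi^{-1}$ gives $d(\t^c_g(z_1),\t^c_g(z_2))\le\alpha$.

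The main obstacle is that $\pi(\Sigma_i)$ is continuous but not smooth, so the almost parallel hypothesis on $f$ does not literally apply to the pair $(\pi(\Sigma_1),\pi(\Sigma_2))$. I resolve this through the Remark following the definition, which permits transversals making uniformly positive angles with $\mathcal{W}^c_f$---an angle bound that $\pi(\Sigma_i)$ inherits from the closeness of $\pi$ to the identity and of the two splittings. If literal smoothness is preferred, one may compose locally with small $f$-center holonomies onto smooth perpendicular transversals; these auxiliary holonomies are of $C^0$-displacement at most $O(\varepsilon)$ and are equicontinuous with modulus vanishing with $\varepsilon$, so they can be absorbed into the estimate.

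For the second assertion, I write $\t^c_g\circ\pi$ as the uniformly continuous restriction $\pi|_{\S_f(x)}$ followed by the $g$-center holonomy onto $\S_g(y)$. The almost parallel property of $g$ established above provides equicontinuity of $g$-center holonomies uniformly over all families of transversals with uniformly positive angle, and in particular over the family $\{\pi(\S_f(x))\}_x$ of topological transversals whose angles with $\mathcal{W}^c_g$ are controlled by paragraph~one. Combining equicontinuity of these holonomies with the uniform continuity of $\pi$ yields uniform continuity of $\t^c_g\circ\pi$ with modulus independent of $x$ and $y$, as claimed.
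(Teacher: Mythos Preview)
Your overall strategy coincides with the paper's: both arguments rest on the leaf conjugacy $\pi$ from Theorem~C, the uniform continuity of $\pi$ and $\pi^{-1}$ on the compact $M$, and the intertwining relation $\pi\circ\theta^c_g=\theta^c_f\circ\pi$ (with the $f$-holonomy taken between the images $\pi(\Sigma_1),\pi(\Sigma_2)$). That identity is correct and is the heart of the matter.

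The gap is in your primary resolution of the smoothness obstacle. The Remark after the definition only relaxes perpendicularity to a uniform lower bound on the \emph{angle}; it still presupposes smooth transversals with tangent spaces. Since $\pi$ is merely a homeomorphism, $\pi(\Sigma_i)$ need not be differentiable anywhere, so ``the angle $\pi(\Sigma_i)$ makes with $\mathcal{W}^c_f$'' is undefined and the Remark cannot be invoked. The same objection applies in your second paragraph to the family $\{\pi(\Sigma_f(x))\}_x$, where you again appeal to an angle bound for topological transversals.

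Your alternative---factoring through short local $f$-center holonomies onto genuinely smooth perpendicular transversals---is the right fix, and it is precisely what the paper does, though packaged more cleanly. The paper introduces smooth $\Sigma_f(\pi(x)),\Sigma_f(\pi(y))\perp\mathcal{W}^c_f$ and works with the foliated boxes
\[
\mathcal{R}_g(y,\beta)=\bigcup_{z\in\Sigma_g(y,\beta)}\mathcal{W}^c_g(z,\beta),
\qquad
\mathcal{R}_f(\pi(x),\alpha')=\bigcup_{z\in\Sigma_f(\pi(x),\alpha')}\mathcal{W}^c_f(z,\alpha').
\]
Uniform continuity of $\pi^{-1}$ gives $\Sigma_f(\pi(y),\beta')\subset\pi(\mathcal{R}_g(y,\beta))$; the almost parallel property of $f$ (now legitimately applied to smooth $\Sigma_f$'s) gives $\theta^c_f(\Sigma_f(\pi(x),\alpha'))\subset\Sigma_f(\pi(y),\beta')$; and uniform continuity of $\pi$ gives $\Sigma_g(x,\alpha)\subset\pi^{-1}(\mathcal{R}_f(\pi(x),\alpha'))$. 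Chasing a point through these inclusions and using that $\pi$ preserves center leaves yields $\theta^c_g(\Sigma_g(x,\alpha))\subset\Sigma_g(y,\beta)$. This box device replaces your unjustified assertion that the auxiliary holonomies are ``equicontinuous with modulus vanishing with $\varepsilon$''; in fact no such vanishing is needed---only the uniform Lipschitz control of local holonomy coming from the $C^1$ foliation charts, which the $\mathcal{R}$-boxes encode automatically. The paper treats the second assertion by the same method and simply says it ``can be obtained in a similar way.''
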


\begin{proof}
Take $\b>0$.  Take $x\in M$ and $y\in \mathcal{W}_g^c(x)$. Let
$\S_g(x)$ and $\S_g(y)$ denote the smooth surfaces with
$\S_g(x)\perp \mathcal{W}_g^c$ and $\S_g(y)\perp \mathcal{W}_g^c$.
Denote $\S_g(x,\b)=\{z\in \S_g(x): d(x,z)\le \a\}$. We need to show
that there exists $\a>0$ independent of $x$ and $y$ such that
$\t_g^c(\S_g(x,\a))\subset \S_g(y,\b)$.

Denote $\disp {\mathcal R}_g(y,\b)=\cup_{z\in
\S_g(y,\b)}\mathcal{W}_g^c(z,\b)$, where $\mathcal{W}_g^c(z,\b)$ is
the local center leaf of $g$ at $z$ of size $\b$. Clearly,
${\mathcal R}_g(y,\b)$ contains a ball about $y$ of radius $\b$.
Since $\pi$ is a homeomorphism, $\pi^{-1}$ is uniformly
continuous on $M$. So there is $\b'>0$ independent of $y$ such that
$\pi\bigl({\mathcal R}_g(y,\b)\bigr)$ contains a ball of radius
$\b'$ about $\pi(y)$. In particular, $\pi({\mathcal
R}_g(y,\b))\supset \S_f(\pi(y),\b')$, where $\S_f(\pi(y),\b')$ is
the part of a smooth surface $\S_f(\pi(y))$ that is
contained in a ball of radius $\b'$, and $\S_f(\pi(y))\perp
\mathcal{W}_f^c$.

Note that by Theorem C,
$\pi(\mathcal{W}_g^c(y))=\mathcal{W}_f^c(\pi(y))$. Since $f$ has
almost parallel center foliation, there is $\a'>0$ independent of
$\pi(y)$ such that $\t^c_f(\S_f(\pi(x),\a'))\subset
\S_f(\pi(y),\b')$.

Consider the set $\disp {\mathcal R}_f(\pi(x),\a')=\cup_{z\in
\S_f(\pi(x),\b')}\mathcal{W}_f^c(z,\a')$. It contains a ball of
radius $\a'$ about $\pi(x)$. Since $\pi$ is uniformly continuous,
there exists $\a>0$ only depending on $\a'$, such that
$\pi^{-1}\bigl({\mathcal R}_f(\pi(x),\a')\bigr)$ contains a ball of
radius $\a$ about $x$. In particular, it contains $\S_g(x,\a)$.

Now it is easy to check that $\t^c_g\bigl(\S_g(x,\a)\bigr)\subset
\S_g(y,\b)$.

The proof of the second part of the lemma
can be obtained in a similar way.
\end{proof}

\begin{lemma}\label{L2ThmE}
Let $f$ and $g$ be as in Lemma 4.1.  Then we have
$$
\chi^u(g)=\chi^u(f).
$$
\end{lemma}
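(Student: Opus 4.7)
The plan is to transfer unstable volume from $g$ to $f$ through the leaf conjugacy $\pi$ of Theorem C, using the almost parallel property (which holds for both $f$ and $g$ by Lemma \ref{L1ThmE}) to control the distortion introduced by projecting along the center foliation.

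First I would set up a comparison map between unstable manifolds. Fix $r>0$ small and $x\in M$. Since $\pi$ sends $\mathcal{W}^{cu}_g$ to $\mathcal{W}^{cu}_f$, the image $\pi\bigl(\mathcal{W}^u_g(x,r)\bigr)$ is a topological disk contained in $\mathcal{W}^{cu}_f(\pi(x))$. Define $\Phi_x:\mathcal{W}^u_g(x,r)\to \mathcal{W}^u_f(\pi(x))$ by following $\pi$ and then applying the holonomy $\theta^c_f$ along $\mathcal{W}^c_f$ inside the smooth leaf $\mathcal{W}^{cu}_f(\pi(x))$. Because $\mathcal{W}^u_f$ is a smooth foliation of $\mathcal{W}^{cu}_f$ transverse to $\mathcal{W}^c_f$, this projection is well-defined in a uniform neighborhood. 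The first task is to show that $\Phi_x$ is a homeomorphism onto its image with uniformly bounded volume distortion, i.e. there is $C_0>1$ independent of $x$ such that $C_0^{-1}\Vol\bigl(\Phi_x(A)\bigr)\le\Vol(A)\le C_0\Vol\bigl(\Phi_x(A)\bigr)$ for Borel $A\subset\mathcal{W}^u_g(x,r)$. The bi-Lipschitz bounds come from combining (a) the uniform continuity of $\pi$ and $\pi^{-1}$ together with the estimate \eqref{cond2 of tau2} for $\tau^{(2)}$, (b) the equicontinuity of $\theta^c_f$ granted by the almost parallel hypothesis, and (c) the analogous estimates for $g$ from Lemma \ref{L1ThmE}. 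Importantly, all constants are independent of $x$.

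Next I would propagate this comparison under iteration. By iterating the relation $\pi\circ g=\tau^{(2)}\circ f\circ\pi$ from Theorem C, there exist center motions $T_n$ along $\mathcal{W}^c_f$ with
\begin{equation*}
\pi\circ g^n=T_n\circ f^n\circ\pi.
\end{equation*}
Applying $\pi$ to $g^n\mathcal{W}^u_g(x,r)$ therefore gives a subset of $\mathcal{W}^{cu}_f(\pi(g^n x))=\mathcal{W}^{cu}_f(T_n f^n\pi(x))$, which projects along $\mathcal{W}^c_f$ onto a subset of $\mathcal{W}^u_f(\pi(g^n x))$. On the other hand, $f^n\mathcal{W}^u_f(\pi(x),r)$ lies in the same center-unstable leaf. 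The almost parallel property, together with the fact that center motions $T_n$ map center leaves to themselves, shows that these two subsets of $\mathcal{W}^u_f(\pi(g^n x))$ are contained in, and contain, each other's $\theta^c_f$-projection up to a uniform neighborhood; applied to $\Phi_{g^n x}$ this yields the volume comparison
\begin{equation*}
C_1^{-1}\Vol\bigl(f^n\mathcal{W}^u_f(\pi(x),r')\bigr)\le\Vol\bigl(g^n\mathcal{W}^u_g(x,r)\bigr)\le C_1\Vol\bigl(f^n\mathcal{W}^u_f(\pi(x),r'')\bigr)
\end{equation*}
with $C_1$, $r'$, $r''$ independent of $n$.

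Finally, taking logarithms, dividing by $n$, and passing to $\limsup$ gives $\chi^u(g,x,r)\le\chi^u(f,\pi(x),r'')\le\chi^u(f)$, and taking supremum over $x$ yields $\chi^u(g)\le\chi^u(f)$. The reverse inequality is obtained by the symmetric argument, using that $\pi^{-1}$ is also a leaf conjugacy (Theorem C gives $\pi$ a homeomorphism, so $\pi^{-1}$ sends $\mathcal{W}^{cu}_f$ to $\mathcal{W}^{cu}_g$) and that $g$ also has almost parallel center foliation. The main obstacle I expect is step two: verifying that the center motions $T_n$ produced by iterating Theorem C do not deteriorate the uniform bi-Lipschitz estimates for $\Phi$, because the size of $T_n$ need not shrink with $n$. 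Resolving this requires the almost parallel assumption in an essential way, since it is precisely what guarantees that the $\theta^c_f$-holonomies used to relate $\pi(g^n\mathcal{W}^u_g(x,r))$ to a piece of $f^n\mathcal{W}^u_f$ remain equicontinuous uniformly in $n$.
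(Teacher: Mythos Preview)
Your overall strategy matches the paper's: transfer unstable pieces via $\theta^c_f\circ\pi$, use the almost-parallel property to make the transfer uniform in $n$, and compare growth. The containments you write in step two are essentially the paper's inclusions
\[
f^n \mathcal{W}^u_f(\pi(x), r') \subset \psi_n\bigl(g^n\mathcal{W}^u_g(x, r)\bigr) \subset f^n\mathcal{W}^u_f(\pi(x), r^*),
\]
with $\psi_n=\theta^c_n\circ\pi$.

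The gap is in step one. You assert that $\Phi_x$ has uniformly bounded volume distortion, and that the bi-Lipschitz bounds come from ``uniform continuity of $\pi$ and $\pi^{-1}$'' together with equicontinuity of $\theta^c_f$. But uniform continuity does not yield a Lipschitz constant, and nothing in Theorem~C or Lemma~\ref{L1ThmE} gives $\pi$ any regularity beyond that of a homeomorphism. There is no reason to expect $\Phi_x$ to be absolutely continuous along unstable leaves, let alone to have bounded Jacobian; your constant $C_0$ need not exist. The estimate \eqref{cond2 of tau2} controls the projection $\tau^{(2)}$, not $\pi$. Your own diagnosis at the end (worrying that the $T_n$ might ``deteriorate'' the bi-Lipschitz estimates) is therefore misplaced: the estimates are not available to begin with.

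The paper sidesteps this by never comparing volumes through a Jacobian. It proves a separate covering lemma (Sublemma~\ref{SLgrowth}): if $\psi:W\to W^*$ is one-to-one with $W'\subset\psi(W)$, both $W$ and $W'$ have uniformly controlled volumes of small balls (true here since unstable leaves have bounded geometry), and $\psi$ satisfies only the uniform-continuity-type condition that for every $\alpha$ there is $\beta$ with $\psi(W(y,\beta))\subset W'(\psi(y),\alpha)$, then $\Vol(W')\le C\Vol(W)$. The proof is a counting argument: take a maximal $2\alpha$-separated set in $W'$, pull it back to a $\beta$-disjoint family in $W$, and compare cardinalities. The second part of Lemma~\ref{L1ThmE} supplies exactly this uniform-continuity condition for $\psi_n$, with $\beta$ independent of $n$, so $C$ is uniform in $n$ and the growth comparison follows. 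This replaces your Jacobian step entirely.
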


\begin{proof}
Take $x\in M$ and $r>0$.

By the last lemma we know that there exists $r^*\ge r'>0$ such that
\begin{equation}\label{fdefr}
\mathcal{W}^u_f(\pi(x), r')
\subset (\t^c \circ \pi)(\mathcal{W}^u_g(x, r))
\subset \mathcal{W}^u_f(\pi(x), r^*),
\end{equation}
where $\t^c$ is the holonomy map into
$\mathcal{W}^u_f(\pi(x), r^*)$.

Define $\psi_n=\t^c_n\circ \pi$, where $\t^c_n$ is the
holonomy map into $f^n\mathcal{W}^u_f(\pi(x), r^*)$. It is easy to
check that \eqref{fdefr} implies
\begin{equation*}
f^n \mathcal{W}^u_f(\pi(x), r')
\subset \psi_n(g^n\mathcal{W}^u_g(x, r))
 \subset f^n\mathcal{W}^u_f(\pi(x), r^*)
\end{equation*}
for any $n>0$.
Moreover, by the second part of Lemma~\ref{L1ThmE}, we know that
for any $\a>0$, there exists $\b>0$ such that
for any $x\in M$, $n>0$, and $y\in  g^n\mathcal{W}^u_g(x, r)$,
$$
\psi_n \mathcal{W}^u_g(y, \b) \subset \mathcal{W}^u_f(\psi_n(y), \a).
$$
The inclusions mean that Condition~(b) of
Sublemma~\ref{SLgrowth} below is satisfied with $\psi_n=\psi$,
$W=g^n\mathcal{W}^u_g(x, r)$ and $W'=f^n\mathcal{W}^u_f(\pi(x), r')$
for all $n\ge 0$. Since $\mathcal{W}^u_f$ and $\mathcal{W}^u_g$ are
smooth submanifolds with bounded curvature, Condition~(a) of the
sublemma holds.

Now we apply Sublemma~\ref{SLgrowth} to get that there is $C>0$
independent of $n$ such that for all $n\ge 0$,
$$
    \Vol (f^n\mathcal{W}^u_f(x, r'))
\le C \Vol (g^n\mathcal{W}^u_g(x, r)).
$$
Since this is true for any $x$, we get  $\chi^u(f)\le \chi^u(g)$.

We can apply similar arguments, by using the inverse of $\pi$ and
the fact that $g$ has almost parallel center foliation, to get
$\chi^u(g)\le \chi^u(f)$.
\end{proof}

\begin{sublemma}\label{SLgrowth}
Let $W, W', W^*\subset M$ be $k$ dimensional manifolds with
$W'\subset W^*$  and $\psi: W\to W^*$ be a one to one map
such that $W'\subset \psi(W)$. Suppose that for all $n\ge 0$,

\begin{enumerate}
\item[{\rm (a)}]
there are constants $\overline C$ and $\underline C$ such that
for any small $\a>0$, $y'\in W'$ and $y\in W$,
$$
\Vol W'(y', \a)\le \overline C\a^k, \qquad \underline C \a^k\le \Vol
W(y,\a);
$$

\item[{\rm (b)}]
for any $\a>0$, there is a constant $\b>0$ such that for any $y\in W$
with $\psi(y)\in W'$,
$$
\psi(W(y,\b)) \subset  W'(\psi(y),\a).
$$
\end{enumerate}
Then there exists $C>0$, only depending on $\underline C$,
$\overline C$, $\a$ and $\b$,  such that
$$
\Vol (W')\le C \Vol(W).
$$
\end{sublemma}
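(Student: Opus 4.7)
The plan is a Vitali-type covering argument carried out in $W$ and transported to $W'$ through $\psi$. The key point is that condition~(b) lets us convert a small ball in $W$ (whose volume we can control from below) into a small ball in $W'$ containing its $\psi$-image (whose volume we can control from above).

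First, fix a small $\alpha>0$ and choose $\beta>0$ as in condition~(b). Set $S=\psi^{-1}(W')\subset W$, which is well defined and non-empty since $\psi$ is one to one and $W'\subset\psi(W)$. Using Zorn's lemma (or just a greedy construction) pick a maximal collection $\{y_i\}_{i\in I}\subset S$ with the property that the balls $W(y_i,\beta/2)$ are pairwise disjoint in $W$. By maximality, for every $y\in S$ there exists $i$ with $d(y,y_i)<\beta$, so the family $\{W(y_i,\beta)\}_{i\in I}$ covers $S$.

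Next I would pass this covering through $\psi$. For any $z\in W'$, write $z=\psi(y)$ with $y\in S$ (possible because $W'\subset\psi(W)$ and $\psi$ is one to one); then $y\in W(y_i,\beta)$ for some $i$, and condition~(b) gives
$$
z=\psi(y)\in \psi\bigl(W(y_i,\beta)\bigr)\subset W'(\psi(y_i),\alpha).
$$
Hence $W'\subset \bigcup_{i\in I} W'(\psi(y_i),\alpha)$. The upper bound in (a) then yields
$$
\Vol(W')\le \sum_{i\in I}\Vol\bigl(W'(\psi(y_i),\alpha)\bigr)\le |I|\,\overline C\,\alpha^k.
$$
Meanwhile, the disjointness of the balls $W(y_i,\beta/2)$ in $W$ together with the lower bound in (a) gives
$$
|I|\,\underline C\,(\beta/2)^k\le \sum_{i\in I}\Vol\bigl(W(y_i,\beta/2)\bigr)\le \Vol(W).
$$

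Combining these two inequalities produces
$$
\Vol(W')\le \frac{\overline C\,(2\alpha)^k}{\underline C\,\beta^k}\,\Vol(W),
$$
so the constant $C=\overline C(2\alpha)^k/(\underline C\,\beta^k)$ depends only on $\overline C$, $\underline C$, $\alpha$, and $\beta$, as claimed. The only mildly delicate point is making sure $\alpha$ (and consequently $\beta/2$) is small enough for both bounds in~(a) to be available simultaneously, but since condition~(a) is assumed uniform for all small radii and $\alpha$ is at our disposal, this is automatic; no geometric obstacle arises beyond organising the covering correctly.
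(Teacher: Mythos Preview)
Your proof is correct and follows essentially the same packing/covering argument as the paper: both pick a maximal separated set, use condition~(b) to relate balls in $W$ and $W'$, and compare volumes via condition~(a), arriving at the identical constant $C=\overline C(2\alpha)^k/(\underline C\,\beta^k)$. The only cosmetic difference is that the paper places its $2\alpha$-separated set in $W'$ and pulls back through $\psi^{-1}$, whereas you place your $\beta/2$-packing in $\psi^{-1}(W')\subset W$ and push forward; these are dual versions of the same idea.
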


\begin{proof}
Fix $\a>0$. Take a $2\a$ separated set $y_1', \cdots, y_{\ell_n}'\in
W'$, that is, $d(y_i', y_j')\ge 2\a$ for any $1\le i, j \le \ell_n$.
We also require that the set has maximal cardinality. Hence,
$\{B_{W'}(y_i', 2\a)\}$ form a cover of $W'$. So by part (a) we have
$$
\Vol(W')\le \ell_n\cdot {\overline C}(2\a)^{k}.
$$

Take $\b>0$ as in Condition (b) of the sublemma.
Since the balls in $\{W'(y_i', \a)\}$
are pairwise disjoint, and $\psi W(y_i, \b)\subset B_{W'}(y_i',
\a)$, where $y_i=\psi^{-1}y_i'\in W$, we see that $\{W(y_i, \b)\}$
are pairwise disjoint. Hence,
$$
\Vol(W)\ge \ell_n\cdot {\underline C}\b^{k}.
$$

So we have
$$
\Vol(W')\le C \Vol(W),
$$
where $\disp C= {\overline C}(2\a)^{k}/{\underline C}\b^{k}$.
\end{proof}

\end{document}